\documentclass[11pt]{article}

\usepackage[margin=1in]{geometry}
\usepackage{amsmath,amssymb,amsthm,mathtools}
\usepackage{booktabs,array,longtable}
\usepackage{enumitem}
\usepackage{xcolor}
\usepackage{hyperref}
\usepackage{cleveref}
\usepackage{tikz}
\usepackage[expansion=false]{microtype}
\usepackage{listings}
\usepackage[T1]{fontenc}
\IfFileExists{lmodern.sty}{\usepackage{lmodern}}{}
\usepackage{float}
\usepackage{textcomp}

\usetikzlibrary{arrows.meta,positioning,calc}

\hypersetup{
  colorlinks=true,
  linkcolor=blue!55!black,
  citecolor=green!45!black,
  urlcolor=blue!65!black,
  pdftitle={Braess' Paradox in Uniform Affine Grid Networks},
}

\newtheorem{theorem}{Theorem}[section]

\newtheorem{lemma}[theorem]{Lemma}
\newtheorem{corollary}[theorem]{Corollary}

\newtheorem{observation}[theorem]{Observation}
\newtheorem{definition}[theorem]{Definition}
\theoremstyle{remark}
\newtheorem{remark}[theorem]{Remark}
\theoremstyle{plain}

\newcommand{\Edir}{\mathcal{E}_{\rightarrow}}

\newcommand{\Lap}{\mathsf{L}}
\newcommand{\ee}{\mathbf{e}}
\newcommand{\E}{\mathcal{E}}

\numberwithin{figure}{section}

\title{\textbf{Braess' Paradox in Uniform Affine Grid Networks}}
\author{Andy Lu\thanks{Saratoga High School, Saratoga, California, USA.
Email: \texttt{anlu9183@gmail.com}} \quad and \quad Steven J. Miller\thanks{Department of Mathematics, Williams College,
Williamstown, Massachusetts, USA.
Email: \texttt{sjm1@williams.edu}}}
\date{\today}

\begin{document}
\maketitle

\begin{abstract}
\noindent Braess' Paradox is the phenomenon in which adding an edge to a congestion network increases total travel time. We study the paradox in directed rectangular grids where every edge shares the latency function $\ell(x)=ax+b$ and an added chord has latency $\ell_{*}(x) = cx+d$, where $a > 0$ and $b,c,d \ge 0$. Using an analogy with electrical networks, we bound the change in total travel time. We then give a necessary and sufficient condition for a chord to induce the paradox for some choice of nonnegative coefficients and compute the exact proportion of such chords in all grids with dimensions at most $100$. Such chords are scarce, and the fraction is maximized near an aspect ratio of $2:1$. We then improve the established $4/3$ upper bound on the Braess Ratio to one depending only on the grid dimensions, approaching $1.207$ on squares and $4/3$ on thin grids. Finally, we prove any ratio-maximizing chord must have zero latency.
\end{abstract}

\section{Introduction}

Road networks are generally designed under the intuition that providing drivers with more routes should reduce congestion and improve travel times. Braess' Paradox shows that this intuition can fail when each driver independently chooses the route that is fastest for them. In some networks, adding a new road changes these individual route choices in a way that increases the travel time experienced by every driver.

We model traffic as a continuous flow rather than as individual cars. 
This is a reasonable approximation when the number of drivers is large \cite{roughgarden2005}, since the route choice of one driver has a negligible effect on the overall congestion. Formally, when the players in an atomic splittable game are replicated, their equilibria converge to the continuous-flow equilibrium \cite{hauriemarcotte1985}.

We also consider traffic after the flow pattern has stabilized. This provides a reasonable approximation when demand is roughly constant over the period of interest.  Traffic enters the network at $s$ and leaves at $t$ at the same constant rate $q$, while at every other vertex the incoming and outgoing flow rates are equal. Thus, traffic does not continually accumulate at intersections. The steady-state assumption is an idealization, since real traffic varies over time and may not reach an equilibrium. However, Braess' Paradox is defined as a comparison between equilibrium costs, so we must work in the equilibrium setting to study it at all.

Consider a traffic network represented by a directed graph $G=(V,E)$ with a source vertex $s$, and a sink vertex $t$. Each edge contains some amount of traffic congestion, or flow; we set the total flow entering the graph at $s$ and leaving at $t$ as $q>0$. The congestion is a quantity proportional to the number of vehicles, rather than a literal vehicle count. 

 Assuming that each user traversing an edge under the same conditions experiences the same average speed, we may assign a latency function $\ell_e(x) \ge 0$ to each edge $e$, where $x$ denotes its flow. The term `latency' is conventional in selfish-routing literature \cite{roughgarden2005}.

Wardrop \cite{wardrop1952} introduced two fundamental traffic-flow configurations. The first is the \emph{User Equilibrium}, where no individual user can reduce their travel time by unilaterally changing routes. Thus for continuous latency functions and nonatomic flow, all used paths have the same travel time, while unused paths have an equal or greater travel time. The second configuration is the \emph{System Optimum}, in which traffic is routed to minimize the total travel time of all users. 

A flow pattern $f = (f_e)_{e \in E}$ specifies the nonnegative flow $f_e$ on each directed edge $e$. Its total cost---equivalently, its total travel time, as our latency functions measure time---is
\[
    C(f)\ :=\ \sum_{e\in E}f_e\ell_e(f_e).
\]
We write $f^G_{\mathrm{UE}}$ and $f^G_{\mathrm{SO}}$ for the User Equilibrium and System Optimum flows on graph $G$, respectively. 

Braess' Paradox concerns the change in User Equilibrium caused by adding
a new edge. Let $G+e$ denote the network obtained by adding an edge $e$
to $G$, while keeping the source, sink, and total demand unchanged.

\begin{definition}[Braess \cite{braess2005}]
    The Braess Ratio due to an added edge $e$ is defined as \[BR(e) \ :=\  \frac{C(f^{G + e}_{UE})}{C(f^G_{UE})}. \] If the added edge is clear, we write $BR$ for $BR(e)$. Braess' Paradox occurs when $BR(e) > 1$.
\end{definition}
For convenience, we write \[C_{\mathrm{new}} \ :=\  C\left(f_{UE}^{G + e}\right) \qquad \text{and} \qquad C_{\mathrm{old}} \ :=\  C\left(f_{UE}^{G}\right).\]

Previous work on Braess' Paradox has largely focused on broad theoretical aspects. Milchtaich \cite{Milchtaich2006} showed that a two-terminal network can admit the paradox if and only if it is not series-parallel, and Roughgarden proved that detecting the optimal subnetwork is NP-hard \cite{Roughgarden2006}. Steinberg and Zangwill \cite{SteinbergZangwill1983} provided early conditions for the paradox to occur, while Valiant and Roughgarden \cite{valiantroughgarden2010} and Chung, Young, and Zhao \cite{chungyoungzhao2012} proved it arises with high probability in random graphs and expanders. In contrast, we focus on a grid setting by characterizing chords capable of inducing the paradox. Grids are a natural setting for this question; street networks are frequently laid out on approximate rectangular grids, and the resulting structure is tractable enough to permit exact results.

Let $G_{m,n} = (V,E)$ be an $m \times n$ grid containing $mn$ cells. In particular, there are $(m+1) \times (n+1)$ vertices. The source $s$ and sink $t$ are taken as opposite corners
\[
s\ =\ (0,0),\qquad t\ =\ (m,n).
\]
We impose the simplifying restriction that grid roads must be directed toward the sink $(m,n)$. From a vertex $(i,j)$ the outgoing edges are
\[
(i,j)\to(i+1,j) \text{ when } i <m  \qquad \text{and} \qquad
(i,j)\to(i,j+1) \text{ when } j < n,
\]
and every edge flow satisfies $f_e\ge 0$: no driver may traverse a road backwards. 

Henceforth assume each grid edge $e \in E$ has the same affine latency
\[
\ell_e(x)\ =\ ax+b, \qquad x\ \ge\ 0, \]
for constants $a>0,\quad b\geq 0.$ Meanwhile, let the added edge, or \emph{chord}, have latency \[\ell_*(x)\ =\ cx+d, \qquad x\ \ge\ 0,\] with $c,d \ge 0$. In particular, $b, d\ \ge0$ guarantee nonnegative latency at zero flow, while $a > 0$ and $c \ge0$ guarantee travel time does not decrease when traffic increases. We require $a > 0$ to exclude the degenerate case in which Braess' Paradox cannot occur. 

The use of identical latencies models a grid in which roads have comparable geometry. Meanwhile, the chord's latency may differ to accommodate a road with different characteristics, such as a different length. 

We restrict our added chord to be directed $u \rightarrow v, u\neq v$ such that $uv$ is not an edge of the underlying undirected grid. Figure~\ref{fig:grid-example} illustrates the construction for $G_{3,3}$.

\begin{figure}[H]
    \centering
    \includegraphics[scale=0.88]{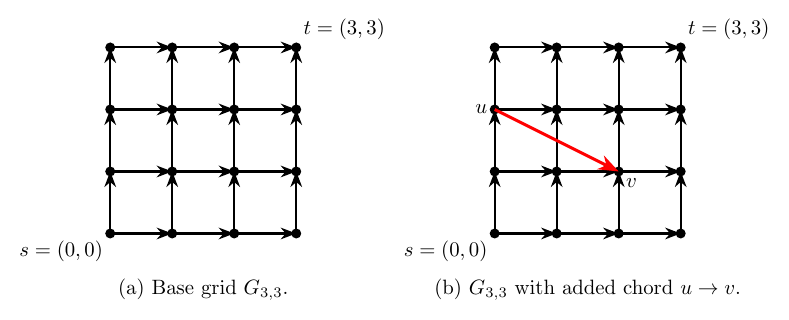}
    \caption{An example of the directed rectangular-grid configuration.}
    \label{fig:grid-example}
\end{figure}

\begin{definition}
Let the \emph{level} of a vertex $(i,j)$ be \[\lambda(i,j) \ :=\ i + j. \]
For an added edge $u \rightarrow v$, the level gain is then \[k_{u,v} \ :=\  \lambda(v) - \lambda(u).\] When $u \rightarrow v$ is fixed, we write $k_{u,v}$ as $k$.
\end{definition}

A feasible flow pattern routes total demand $q$ from $s$ to $t$: all edge flows are nonnegative, flow is conserved at every vertex other than $s$ and $t$, and the net outflow from $s$ and net inflow to $t$ both equal $q$. For a feasible flow pattern $f$, define the Beckmann Potential \cite{beckmann1956}
\[
Z(f)
\ :=\ 
\sum_{e\in E}\int_0^{f_e}\ell_e(y) dy.
\]
The integral is chosen so that an infinitesimal change in edge flow causes a change in $Z$ of exactly that edge's latency. Thus the conditions minimizing $Z$ correspond to the conditions of the User Equilibrium.

\begin{theorem}[Beckmann, McGuire, Winsten, 1956 \cite{beckmann1956}]

Suppose every latency function is continuous and nondecreasing. Then the minimum Beckmann Potential corresponds exactly to the User Equilibrium flow pattern.  If every latency function is strictly increasing, the equilibrium edge flow is unique.
\end{theorem}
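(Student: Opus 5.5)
The plan is to rewrite the User Equilibrium problem in terms of path flows and identify the Wardrop conditions with the Karush--Kuhn--Tucker conditions of a convex program. Let $\mathcal{P}$ be the finite set of directed $s$--$t$ paths, and write $h=(h_P)_{P\in\mathcal{P}}$ with $h_P\ge 0$ and $\sum_P h_P=q$. Each $h$ induces edge flows $f_e=\sum_{P\ni e}h_P$.

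Every feasible edge flow arises this way, by path decomposition. The graph here is acyclic (the grid plus a forward chord are acyclic when $k_{u,v}>0$), but I would also use the general decomposition into paths plus cycles. If cycles occur, removing them only lowers $Z$, since latencies are nonnegative. A cycle-carrying flow could still have the same potential only if the cycle edges carry zero latency, and I will note this caveat; in our acyclic setting it does not arise.

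Define $\Phi(h)=Z(f(h))$. Each term $F_e(x)=\int_0^x\ell_e$ is convex because $\ell_e$ is nondecreasing, and it is $C^1$ with derivative $\ell_e(x)$ because $\ell_e$ is continuous. Hence $\Phi$ is a convex $C^1$ function on a compact polytope, so a minimizer exists.

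Next compute the directional derivative:
\[
\partial\Phi/\partial h_P=\sum_{e\in P}\ell_e(f_e)=:L_P(h),
\]
which is the path latency. The feasible set has only linear constraints, so the KKT conditions are both necessary and sufficient for a minimizer of a convex function. They state that there is a $\mu$ with $L_P(h)=\mu$ whenever $h_P>0$ and $L_P(h)\ge\mu$ for all $P$. This is exactly Wardrop's User Equilibrium condition: all used paths share a common travel time, and unused paths are no faster.

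Sufficiency can also be seen directly. Take an equilibrium $h$ and any feasible $h'$. Convexity gives
\[
\Phi(h')-\Phi(h)\ge\sum_P L_P(h)(h'_P-h_P)\ge \mu q-\mu q=0.
\]
The second inequality holds because $L_P(h)=\mu$ wherever $h_P>0$, and $L_P\ge\mu$ together with $h'_P\ge 0$ handles the remaining terms. Necessity follows by contradiction. If some used path $P$ had $L_P>L_{P'}$, shifting $\varepsilon$ flow from $P$ to $P'$ would change $\Phi$ by $\varepsilon(L_{P'}-L_P)+o(\varepsilon)<0$, contradicting minimality.

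For uniqueness, suppose every $\ell_e$ is strictly increasing. Then each $F_e$ is strictly convex, so $Z$ is strictly convex as a function of the edge-flow vector $f$ on the convex set of feasible edge flows. Two distinct minimizing edge-flow vectors $f\ne f'$ would give $Z((f+f')/2)<Z(f)$, which is a contradiction. Path flows need not be unique, which is why the statement claims uniqueness only for edge flows.

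The main obstacle is the bookkeeping between path flows, where the Wardrop conditions live, and edge flows, where $Z$ and uniqueness live. This requires that every feasible edge flow decomposes into path flows, and that $Z$ depends on $h$ only through $f$. I would address it first by invoking the standard flow-decomposition lemma on the acyclic network.
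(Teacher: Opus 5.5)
Your argument is correct. It is the standard convex-programming proof. The paper does not prove this theorem: it cites Beckmann, McGuire, and Winsten and offers only the heuristic that an infinitesimal change of flow on an edge changes $Z$ by that edge's latency. Your computation $\partial\Phi/\partial h_P = L_P$, together with the KKT and convexity steps, is the rigorous version of that heuristic.

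One small correction: your claim that the network is acyclic covers only $k_{u,v}>0$. The paper also permits chords with $k_{u,v}\le 0$, and those with $v\le u$ coordinatewise do create directed cycles. Your cycle-removal step already handles these. Since $a>0$, removing such a cycle strictly lowers $Z$, the same observation the paper uses to rule out $z>q$.
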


For affine latency functions $\ell_e(f_e)=a_ef_e+b_e$, the potential becomes
\[
Z(f)
\ =\ 
\sum_{e\in E}
\left(
\frac{a_e}{2}f_e^2+b_ef_e
\right).
\]

Working in the directed grids defined above, we establish the following. 

We first provide an analogy between traffic and electrical resistor networks. Utilizing results on the latter, we introduce the orientation-constrained energy. This yields a bound on the increase in total travel time valid at every demand and every choice of latency coefficients, and shows that a negative electrical voltage drop is necessary for the paradox. 

Then, we prove that an added chord may induce Braess' Paradox in the grids defined above, for some choice of coefficients $b,c,d \ge 0$, if and only if its electric voltage drop is negative and its level gain is positive. This allows us to compute the proportion of Braess-capable chords in grids with dimensions at most 100; numerical results are provided in the \hyperref[app:results]{appendix}. Although previous prevalence results \cite{valiantroughgarden2010, chungyoungzhao2012} are probabilistic and asymptotic, the grid structure permits an exact count: square grids smaller than $7 \times 7$ admit no such chord, and the proportion peaks near aspect ratio $2:1$ at roughly $0.0414$. 

Results from Roughgarden and Tardos \cite{roughgardentardos2002} imply the Braess Ratio is at most $4/3$ for affine latencies on arbitrary networks; we sharpen this on grids, proving an upper bound of \[BR\ \le \ 1+\frac{1}{\frac{m+n}{\max(m,n)-1} + 2\sqrt{\frac{m+n}{\max(m,n)-1}}}\] which approaches $1.207$ on square grids and $4/3$ only as the grid becomes thin. Writing $r = n/m$ and assuming $m \ge n$, the bound admits the expansion \[\frac43 - \frac29 r + \frac{19}{108}r^2 + O(r^3) \qquad \text{as }r \rightarrow 0.\] We also prove that a chord maximizes the Braess Ratio only if it has zero latency.

The organization of the paper is as follows. Section~\ref{sec:criteria} develops the analogy with electrical networks, introduces the orientation-constrained energy, and proves the Braess criterion. We then analyze the consequences of the criterion in Section~\ref{sec:consequences}. Section~\ref{sec:bounds}
proves the grid-dependent ratio bound and optimal chord latency. Lastly, Section~\ref{sec:future} suggests possible extensions.

\section{Criteria for Braess' Paradox}

\begin{observation}
\label{thm:linear-term}
    In the directed grid, every route from $s$ to $t$ uses exactly $m+n$ edges. Consequently, for any feasible flow routing total demand $q$ through the base grid $G_{m,n}$,
\[\sum_{e\in E}f_e\ =\ q(m+n).\]
\end{observation}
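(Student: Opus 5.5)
The first claim follows from the level function $\lambda(i,j)=i+j$. Every grid edge has the form $(i,j)\to(i+1,j)$ or $(i,j)\to(i,j+1)$, so traversing it raises $\lambda$ by exactly one. A directed route from $s=(0,0)$ to $t=(m,n)$ therefore increases the level from $\lambda(s)=0$ to $\lambda(t)=m+n$ in unit steps, and so it contains exactly $m+n$ edges. This holds for every route, since no edge decreases the level and no edge jumps more than one level.

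For the identity $\sum_e f_e = q(m+n)$, I would avoid path decompositions and use a level-cut argument that relies only on feasibility. For each $k\in\{0,1,\dots,m+n-1\}$, let $E_k$ be the set of grid edges from level $k$ to level $k+1$. These sets partition $E$. Let $S_k=\{v:\lambda(v)\le k\}$. Every edge leaving $S_k$ lies in $E_k$, and no edge enters $S_k$ from outside, because all edges increase the level.

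I would then sum conservation over $S_k$. The set contains $s$ but not $t$, and the only net source inside it is $s$, with net outflow $q$. So the total flow on edges leaving $S_k$ equals $q$, that is, $\sum_{e\in E_k} f_e = q$. Summing over the $m+n$ values of $k$ gives $\sum_{e\in E} f_e = q(m+n)$.

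Alternatively, one could decompose the flow into $s$–$t$ paths, which is possible because the graph is acyclic. Each path of weight $w$ contributes $w(m+n)$ to the edge sum, and the path weights total $q$. The cut argument is cleaner, though, because it needs no decomposition theorem.

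The only step needing care is verifying that the cut through $S_k$ carries net flow exactly $q$. This requires summing the conservation equations over $S_k$ and noting that internal edges cancel, while no edges cross back into $S_k$. The rest is bookkeeping.
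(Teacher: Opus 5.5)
Your proof is correct. The paper gives no separate proof of this observation. The word ``Consequently'' points to the path-decomposition argument you list as your alternative: a feasible flow on the acyclic grid is a nonnegative combination of $s$--$t$ routes of total weight $q$, each of length $m+n$. The same level bookkeeping reappears in the proof of Lemma~\ref{lem:linear_term} as ``total accumulated level gain $q(m+n)$.''

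Your level-cut argument gets the identity directly from the conservation equations. It is the layer-by-layer form of the telescoping identity $\sum_e f_e\,(\lambda(\mathrm{head}(e))-\lambda(\mathrm{tail}(e))) = q(\lambda(t)-\lambda(s))$. So it needs no flow-decomposition theorem, no acyclicity, and not even $f\ge 0$.

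That is what your route gains. It carries over unchanged to the augmented network of Lemma~\ref{lem:linear_term}. There the chord $u\to v$ crosses $|k|$ of the cuts $S_j$, forward if $k>0$ and backward if $k<0$. This gives $\sum_{e\in E} f_e + kz = q(m+n)$ even when the chord creates directed cycles, a case where a path decomposition would have to track cycle contributions separately.

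The paper's reading is the more intuitive one: each unit of flow travels a route of exactly $m+n$ edges. But it leans implicitly on the decomposition theorem.
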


As a result, the Beckmann Potential in the base grid is
\[
Z(f_{G_{m,n}})
\ =\ 
\frac a2\sum_{e\in E}f_e^2+bq(m+n),
\]
whose second term is fixed regardless of flow pattern. Hence the base-grid equilibrium is obtained by minimizing
\[
a\sum_{e\in E}f_e^2,
\] which is twice the quadratic term of the Beckmann Potential.

\label{sec:criteria}
\subsection{Basic Results}

\begin{lemma}[Augmented Linear Term]
\label{lem:linear_term}
 If $ u\rightarrow v$ carries flow $z$, then the flow on the original grid satisfies
\begin{equation}
\sum_{e\in E}f_e\ =\ q(m+n)-k_{u,v}z.
\end{equation}
\end{lemma}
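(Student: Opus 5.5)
Because the grid has a linear level structure, the cleanest route is a potential (telescoping) argument with the level function $\lambda$, which also handles the case where the flow is not decomposed into paths. Let $f$ be a feasible flow on $G+e$ in which the chord $u\to v$ carries flow $z$, and let $f_e$, $e\in E$, be the flows on the grid edges. Every grid edge $(i,j)\to(i+1,j)$ or $(i,j)\to(i,j+1)$ raises the level by exactly one. Hence
\[
\sum_{e\in E} f_e \;=\; \sum_{e=(x\to y)\in E} f_e\bigl(\lambda(y)-\lambda(x)\bigr).
\]

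We then regroup this sum vertex by vertex. The right-hand side equals $\sum_{w\in V}\lambda(w)\bigl(\mathrm{in}_E(w)-\mathrm{out}_E(w)\bigr)$, where $\mathrm{in}_E$ and $\mathrm{out}_E$ denote inflow and outflow along grid edges only. Flow conservation in $G+e$ (counting the chord) gives:
\begin{itemize}
\item at $t$, net grid inflow $q$;
\item at $s$, net grid outflow $q$;
\item at $u$, grid inflow exceeding grid outflow by $z$, because $z$ leaves via the chord;
\item at $v$, grid outflow exceeding grid inflow by $z$;
\item zero imbalance elsewhere.
\end{itemize}
If $u$ or $v$ coincides with $s$ or $t$, these contributions simply add, so the bookkeeping still holds. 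Substituting,
\[
\sum_{e\in E} f_e \;=\; q\lambda(t)-q\lambda(s)+z\lambda(u)-z\lambda(v) \;=\; q(m+n)-k_{u,v}z,
\]
since $\lambda(t)=m+n$ and $\lambda(s)=0$.

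As an alternative or sanity check, one could decompose $f$ into path flows. A path avoiding the chord uses $m+n$ grid edges, by Observation~\ref{thm:linear-term}. A path using the chord goes from $s$ to $u$ in $\lambda(u)$ edges and from $v$ to $t$ in $m+n-\lambda(v)$ edges, so it uses $m+n-k$ grid edges. Acyclicity makes the path decomposition exhaustive: the grid is acyclic, and if $k\le 0$ cycles could in principle appear through the chord. For that reason the telescoping argument is preferable, since it needs no decomposition.

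The only point requiring care is the signs in the vertex regrouping, in particular the chord's contribution at $u$ and $v$, and allowing the endpoints to coincide with $s$ or $t$. Otherwise the argument is routine.
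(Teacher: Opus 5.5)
Your proof is correct and follows the paper's argument: the paper notes that each unit of grid flow raises the level by $1$ and each unit of chord flow raises it by $k$, so the total level gain $q(m+n)$ equals $\sum_{e\in E} f_e + kz$. Your vertex-by-vertex regrouping via flow conservation is the rigorous form of that accounting, and it also covers chord endpoints coinciding with $s$ or $t$ and cycles through the chord without any path decomposition.
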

\begin{proof}
    Every unit of flow along a grid edge increases its level by $1$. Meanwhile, every unit of flow along the additional edge increases the level by $k$. The total accumulated level gain must be $q(m+n)$, hence \[q(m+n) \ = \    kz + \sum_{e \in E} f_e,\] which rearranges to the desired identity. 
\end{proof}

\begin{lemma}[Direct edge]
\label{thm:direct-edge}
Assume that each edge in a network $G$ has a continuous nondecreasing latency function. Then adding a direct road from $s$ to $t$ cannot increase the total travel time.
\end{lemma}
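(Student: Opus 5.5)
Let $L^{\mathrm{old}}$ denote the common latency of all used $s$--$t$ paths in $f^G_{UE}$, and $L^{\mathrm{new}}$ the common latency of used paths in $f^{G+e}_{UE}$, where $e$ is the direct road $s\to t$ with continuous nondecreasing latency $\ell_*$. By Wardrop's characterization (equivalently, the first-order conditions for the Beckmann minimizer), every user in an equilibrium experiences the same travel time on a used path. Hence $C_{\mathrm{old}} = qL^{\mathrm{old}}$ and $C_{\mathrm{new}} = qL^{\mathrm{new}}$. It therefore suffices to prove $L^{\mathrm{new}} \le L^{\mathrm{old}}$.

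I would split into two cases according to the flow $z$ on the new edge in $f^{G+e}_{UE}$.

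\emph{Case $z=0$.} The restriction of $f^{G+e}_{UE}$ to $G$ is feasible for demand $q$ on $G$, and it still satisfies the equilibrium conditions there. Every path of $G$ is a path of $G+e$, and used paths are unchanged. Thus it is a User Equilibrium of $G$. Equilibrium costs are unique: the Beckmann potential is convex, so any two minimizers give edges the same latency values. Hence $L^{\mathrm{new}} = L^{\mathrm{old}}$.

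\emph{Case $z>0$.} The direct road is a used path, so $L^{\mathrm{new}} = \ell_*(z) \le \ell_*(q)$. On the grid edges, the remaining flow $f' := f^{G+e}_{UE}|_G$ routes demand $q-z < q$. Its used paths all have latency $L^{\mathrm{new}}$, which is at most the latency of any unused path. So $f'$ is an equilibrium of $G$ at demand $q-z$, with common cost $L(q-z)$. The key claim is that the equilibrium cost $L(\cdot)$ of $G$ is nondecreasing in demand. This gives $L^{\mathrm{new}} = L(q-z) \le L(q) = L^{\mathrm{old}}$.

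The main obstacle is proving this monotonicity of equilibrium latency in demand for a general network with nondecreasing latencies. The plan is to use the variational inequality. Let $f^1, f^2$ be equilibria at demands $q_1 < q_2$, with common costs $L_1, L_2$. Monotonicity of each $\ell_e$ gives
\[
\sum_e \bigl(\ell_e(f^2_e)-\ell_e(f^1_e)\bigr)\bigl(f^2_e-f^1_e\bigr)\ \ge\ 0 .
\]
Decomposing each flow into paths, the left side equals
\[
\sum_P h^2_P\bigl(c^2_P - c^1_P\bigr)\;-\;\sum_P h^1_P\bigl(c^2_P - c^1_P\bigr),
\]
where $h^i_P$ is the path flow and $c^i_P$ the path latency under $f^i$. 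Every path satisfies $c^i_P \ge L_i$, with equality on paths used by $f^i$. Therefore
\[
\sum_P h^2_P c^2_P = q_2 L_2, \qquad \sum_P h^1_P c^1_P = q_1 L_1,
\]
\[
\sum_P h^2_P c^1_P \ge q_2 L_1, \qquad \sum_P h^1_P c^2_P \ge q_1 L_2 .
\]
Combining these yields
\[
0 \ \le\ q_2L_2 - q_2L_1 - q_1L_2 + q_1L_1 \ =\ (q_2-q_1)(L_2-L_1),
\]
so $L_2 \ge L_1$. This closes the second case and proves the lemma.
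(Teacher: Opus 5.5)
Your proof has the same skeleton as the paper's. The residual flow on $G$ is a User Equilibrium of $G$ at demand $q-z$, and the equilibrium travel time of $G$ is nondecreasing in demand. The paper cites Hall (1978) for that monotonicity; you prove it with the monotonicity (variational-inequality) computation instead, and that derivation is correct. Because it compares an arbitrary equilibrium at $q_1$ with an arbitrary one at $q_2$, it also avoids any need for uniqueness when $z>0$. The same computation with $q_1=q_2$ forces each term $(\ell_e(f^2_e)-\ell_e(f^1_e))(f^2_e-f^1_e)$ to vanish. That justifies the uniqueness of edge latencies you invoke for $z=0$ more cleanly than the bare appeal to convexity.

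The one real hole is the boundary case $z=q$, which your split into $z=0$ and $z>0$ places in the second case. There $f'$ is the zero flow and has no used paths. So the claim ``$f'$ is an equilibrium of $G$ with common cost $L(q-z)$'', and hence $L^{\mathrm{new}}=L(q-z)$, has no content. Your monotonicity argument likewise assumes each equilibrium has used paths with a common cost $L_i$. Defining $L(0):=\min_P c_P(0)$ does not rescue the equality. If $G$ is a single $s$--$t$ edge of latency $x+1$ and the direct road has latency $0$, then $z=q$ and $L^{\mathrm{new}}=0<1=L(0)$. This is exactly why the paper splits into $z=q$ and $z<q$ rather than $z=0$ and $z>0$.

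The repair is short. When $z=q$, the equilibrium conditions in $G+e$ give $\ell_*(q)\le c_P(0)$ for every $s$--$t$ path $P$ of $G$. For any path $P$ used by $f^G_{UE}$, monotonicity of the edge latencies gives $c_P(0)\le c_P(f^G_{UE})=L^{\mathrm{old}}$. Equivalently, your computation goes through with $q_1=0$, $h^1=0$ and $L_1:=\min_P c_P(0)$, giving $L^{\mathrm{new}}\le L_1\le L^{\mathrm{old}}$. Either version is more direct than the paper's $\varepsilon$-demand argument for the same case.
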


\begin{proof}
We have established that every route taken in the User Equilibrium takes the same amount of time. Let this time be $\tau_H(q)$ for total traffic flow $q$ on graph $H$. Hall \cite{hall1978} proved in 1978 that $\tau_H(q)$ must be nondecreasing in $q$.

Let the augmented graph be $G'$, with flow $z$ passing through the additional edge (and $q-z$ through the original graph). 

If $z = q$, the original graph is unused. Then for any $\varepsilon \in (0,q]$, consider a path from $s$ to $t$ through $G$ used at equilibrium under demand $\varepsilon$. At infinitesimal flow, this path has latency at most $\tau_G(\varepsilon)$. Then the travel time through the direct edge is at most $\tau_G(\varepsilon)$; otherwise a user may switch to a path on the grid while decreasing their travel time. By monotonicity of $\tau_G$, \[\tau_{G'}(q)\  \le\  \tau_G(\varepsilon) \ \le\  \tau_G(q)\] as desired. 

Otherwise, if $z < q$ we have \[\tau_{G'}(q) \ =\  \tau_G(q-z) \ \le \  \tau_G(q).\] As the time taken by each user in the new graph cannot exceed that of the original, the total travel time cannot have increased either.
\end{proof}

\begin{lemma}[Linear Latencies]
\label{thm:linear-latency}
Consider a network in which every original and added edge has linear latency
\[
\ell_e(x)\ =\ a_ex,
\qquad a_e\geq 0.
\]
Adding another linear latency edge cannot increase the total travel time at User Equilibrium.
\end{lemma}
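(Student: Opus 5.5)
Since every latency here is linear, $\ell_e(x)=a_ex$, we can compare User Equilibrium costs directly through the Beckmann Potential. With no constant terms the potential is
\[
Z(f)\ =\ \sum_{e}\frac{a_e}{2}f_e^2,
\]
which is exactly half the total cost:
\[
C(f)\ =\ \sum_e f_e\ell_e(f_e)\ =\ \sum_e a_ef_e^2\ =\ 2Z(f).
\]
So the total travel time at User Equilibrium equals twice the minimum potential over feasible flows.

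The key steps are as follows.

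\emph{Step 1.} By the theorem of Beckmann, McGuire and Winsten, each User Equilibrium minimizes $Z$ over the feasible flows of its network. The latencies are continuous and nondecreasing (we need only $a_e\ge 0$ here), so this applies. Hence
\[
C_{\mathrm{old}}\ =\ 2\min_{f\ \text{feasible in }G} Z(f)\qquad\text{and}\qquad C_{\mathrm{new}}\ =\ 2\min_{g\ \text{feasible in }G+e} Z(g).
\]

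\emph{Step 2.} Any feasible flow $f$ on $G$ extends to a feasible flow on $G+e$ by putting zero flow on the new edge. Conservation is preserved, and the potential is unchanged because the new edge contributes $\frac{a_e}{2}\cdot 0^2=0$. The feasible set of $G+e$ therefore contains an embedded copy of that of $G$ with the same objective values. It follows that the minimum of $Z$ over $G+e$ is at most the minimum over $G$, so $C_{\mathrm{new}}\le C_{\mathrm{old}}$.

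The only point that needs care is that $C=2Z$ must hold at every equilibrium, not merely at some minimizer. When some $a_e=0$, equilibria need not be unique. However, every equilibrium is a minimizer of $Z$, so every equilibrium has the same value of $Z$, namely the minimum. The identity $C=2Z$ holds for all feasible flows, so the total cost is the same at every equilibrium and $C_{\mathrm{new}}$, $C_{\mathrm{old}}$ are well defined.

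For the equivalence between equilibria and minimizers, I would either invoke the cited theorem as stated or recall the standard KKT argument: $Z$ is convex, and its optimality conditions are precisely Wardrop's conditions. Beyond this the argument is a one-line comparison, and I expect no real obstacle.
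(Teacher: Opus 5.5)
Your proposal is correct and follows the paper's own argument: for linear latencies $C = 2Z$, and the old equilibrium, extended by zero flow on the new edge, remains feasible in $G+e$, so the minimum potential and hence the equilibrium cost cannot increase. Your extra remark handles a point the paper passes over: when some $a_e = 0$ and equilibria may not be unique, all of them share the minimal value of $Z$ and therefore the same total cost.
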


\begin{proof}
For linear costs,
\[
Z(f)\ =\ \frac12\sum_e a_ef_e^2
\ =\ \frac12C(f).
\]
In other words, the total travel cost is twice the Beckmann Potential. As $f$ remains a feasible flow pattern in the augmented graph, the new potential cannot be greater than the original. Therefore Braess' Paradox cannot occur in networks with linear edge latencies.
\end{proof}

\subsection{Electricity}
The quadratic portion of the Beckmann Potential can be naturally interpreted using electrical networks \cite{roughgarden2005}.

A resistor is a well-known electrical circuit element that opposes current flow. For an edge with resistance $R$ and current $I$, the voltage drop in the direction of the current is given by Ohm's Law \cite{griffiths2013} \[V \ = \ IR,\] and the power is \[P \ = \ I^2R.\] Throughout the paper, we use the term ``energy'' for the quadratic power dissipation term.  

Due to Ohm's Law, a two-terminal network of resistors can be represented by an effective resistance that produces the same voltage drop and power dissipation.

\begin{theorem}[Thomson's Principle]
    \emph{Thomson's Principle} \cite{doylesnell1984} states that, among all current distributions with the amounts of current entering and leaving at each vertex held fixed, the electrical current minimizes the total dissipated power.
\end{theorem}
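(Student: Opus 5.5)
Thomson's Principle says: among all flows with the net current at each vertex prescribed, the one satisfying Kirchhoff's voltage law (the electrical current) minimizes the dissipated power $\sum_e R_e I_e^2$. I would prove it by a convexity and orthogonality argument. Fix a finite graph with resistances $R_e>0$ and a prescribed divergence vector (currents entering and leaving each vertex summing to zero). Let $I$ be the electrical current: it meets the divergence constraints and is induced by a potential $\phi$, so $R_eI_e=\phi(u)-\phi(v)$ for each edge $e=u\to v$ (Ohm's Law plus Kirchhoff's voltage law). Let $J$ be any other current distribution with the same vertex conditions, and set $D = J - I$. Then $D$ has zero divergence at every vertex, so it is a circulation.

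Next I would expand the power:
\[
\sum_e R_eJ_e^2 \ =\ \sum_e R_eI_e^2 + 2\sum_e R_eI_eD_e + \sum_e R_eD_e^2 .
\]
The key step is to show that the cross term vanishes. Substituting $R_eI_e=\phi(u)-\phi(v)$ and regrouping by vertices gives
\[
\sum_e R_eI_eD_e \ =\ \sum_{e=u\to v}(\phi(u)-\phi(v))D_e \ =\ \sum_{w}\phi(w)\Big(\sum_{e\text{ out of }w}D_e-\sum_{e\text{ into }w}D_e\Big)\ =\ 0,
\]
because $D$ conserves flow at every vertex. Since each $R_e>0$, the remaining term $\sum_e R_eD_e^2$ is nonnegative, and it is zero only when $D=0$. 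So $I$ is the unique minimizer.

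I also need the electrical current to exist. The graph Laplacian with conductances $1/R_e$ is positive semidefinite, and on a connected graph its kernel consists of the constant vectors. Its range is therefore exactly the space of divergence vectors summing to zero, so a suitable potential $\phi$ can be found. Alternatively, the power is a strictly convex coercive function on the affine space of feasible flows, so it has a minimizer, and the first-order (Lagrange) conditions at that minimizer produce exactly Ohm's Law with some potential. Either route works. I expect this existence step to be the only part needing any care; the discrete summation by parts is routine.

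Finally, I would note why this matters for the paper. The minimization in Thomson's Principle is over all currents, with signs unrestricted, whereas the traffic setting requires $f_e\ge0$ on directed edges. So the principle applies directly only when the electrical current happens to respect the edge orientations. Otherwise one must pass to an orientation-constrained minimum, which is presumably what the paper's ``orientation-constrained energy'' will formalize.
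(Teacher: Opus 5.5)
Your argument is correct and complete. Summation by parts against the circulation $D=J-I$ kills the cross term $\sum_e R_eI_eD_e$, which gives both minimality and uniqueness, and either of your existence routes supplies the potential. The Lagrange-multiplier route also covers disconnected graphs, such as the active subgraphs $A$ in Theorem~\ref{thm:structure}, where the Laplacian-range argument would have to be run componentwise.

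The paper gives no proof of its own and only cites Doyle and Snell. Their proof is this same energy expansion with a vanishing cross term, so your approach matches the intended one.
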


\begin{remark}
\label{obs:resistor}
    Lemma~\ref{thm:linear-latency} shows that Braess' Paradox is impossible in a pure resistor network.
\end{remark}

Following Spielman \cite{spielman2019}, we introduce the incidence matrix and the weighted graph Laplacian.

The directed incidence matrix $B\in\mathbb{R}^{|V|\times |E|}$ satisfies \[B_{v, v_1 \rightarrow v_2} \ :=\  \begin{cases}
  1,  & \text{if } v=v_1,\\
 -1,  & \text{if } v=v_2,\\
  0,  & \text{otherwise}.
\end{cases}\] 
For an undirected graph $H=(V,E_H)$ with resistance $r_e$ on edge $e$, the weighted graph
Laplacian $\Lap_H$ is defined by
\[
\left(\Lap_H\right)_{ij}
\ := \ 
\begin{cases}
\displaystyle
\sum_{ik\in E_H} \frac{1}{r_{ik}},
    & \text{if }i=j, \\
-\frac{1}{r_{ij}},
    & \text{if }ij\in E_H, i\neq j \\
0,
    & \text{otherwise}.
\end{cases}
\]
Although our traffic grid is directed, whenever we form a graph
Laplacian, we use the underlying undirected graph obtained by ignoring
the directions of its edges. Let the grid Laplacian be $\Lap$. On our traffic grid with edge latency $ax+b$, we treat $a$ as the resistance on an edge; then we may also represent $\Lap$ as $\frac1a BB^{\mathsf{T}}$.

The current injection vector is the vector $p\in \mathbb{R}^{|V|}$ such that $p_i$ denotes the current flowing into the graph at $i$; a negative $p_i$ represents current flowing out. (We write $p$ rather than $b$, which is reserved for the latency intercept.) As the total injected current must equal the total removed current, we require \[\sum_{i\in V} p_i\ =\ 0.\] Following Ghosh, Boyd, and Saberi \cite{ghosh2008}, the voltage vector $\phi$ satisfies Kirchhoff's law \[\Lap\phi\ =\ p. \] The solution $\phi$ is only unique up to addition of a constant; the canonical solution is given by \[ \phi\ =\ \Lap^+p, \] where $\Lap^+$ denotes the \emph{Moore--Penrose pseudoinverse} of $\Lap$.

The electrical power dissipated by the injection vector $p$ is then
\begin{align*}
    \mathcal{E}(p) \ &:= \sum_{ij \text{ edge}} \frac{1}{r_{ij}}(\phi_i-\phi_j)^2 \\
    &= \phi^{\mathsf{T}}\Lap\phi \\
    &= p^{\mathsf{T}}\Lap^{+}p.
\end{align*}

Write $\ee_v \in \mathbb{R}^{|V|}$ as the standard basis vector with 1 in coordinate $v$ and 0 everywhere else. 

\begin{lemma}[Potential Monotonicity]
\label{lem:monotone}
Let $\phi=\Lap^{+}q(\ee_s-\ee_t)$ be the potential of the electrical flow on the $m \times n$ base grid with $s = (0,0)$ and $t = (m,n)$. Then $\phi$ is \emph{strictly} decreasing along every directed grid edge:
\[
\phi_{i,j}\ >\ \phi_{i+1,j}
\qquad\text{and}\qquad
\phi_{i,j}\ >\ \phi_{i,j+1}.
\]
\end{lemma}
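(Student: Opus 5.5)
We plan to prove that the potential $\phi=\Lap^{+}q(\ee_s-\ee_t)$ strictly decreases along every directed grid edge. The approach exploits two ingredients: the central symmetry of the rectangle, and a monotone coupling, in the form of a discrete maximum principle applied to \emph{difference functions}.

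\emph{Step 1: symmetry.} The map $\sigma(i,j)=(m-i,n-j)$ is a graph automorphism of the undirected grid swapping $s$ and $t$. Hence $\phi\circ\sigma$ solves $\Lap\psi=-q(\ee_s-\ee_t)$. By uniqueness up to constants, $\phi(\sigma(x))=-\phi(x)+C$. We will use this to reduce the number of boundary cases, though it is not strictly essential.

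\emph{Step 2: horizontal differences.} Define $D(i,j)=\phi_{i,j}-\phi_{i+1,j}$ on the vertices with $i<m$; this is the set of horizontal edges, itself an $m\times(n+1)$ array. We want $D>0$. Away from $s$, $t$ and the boundary, $\phi$ is harmonic, so $D$ is harmonic at interior points of this array.

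\emph{Step 3: boundary behaviour.} At the left edge $i=0$ and the right edge $i=m-1$, the missing neighbour of $\phi$ turns the Laplacian equation into a reflecting-type relation for $D$. Concretely, at a boundary vertex $(0,j)$ with $j\neq 0$, harmonicity of $\phi$ (degree $3$) gives
\[
3\phi_{0,j}=\phi_{1,j}+\phi_{0,j+1}+\phi_{0,j-1}.
\]
Combining this with the equation at $(1,j)$ expresses $D(0,j)$ as a positive combination of neighbouring $D$ values, plus possibly the value $D(1,j)$ with coefficient $1$. The top and bottom edges $j=0$ and $j=n$ similarly give Neumann-type conditions.

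The sources enter as follows. At $s=(0,0)$ the equation reads
\[
2\phi_{0,0}-\phi_{1,0}-\phi_{0,1}=q/a>0,
\]
which injects positive ``charge'' into $D(0,0)$. The corresponding relation at $t$ does the same for $D(m-1,n)$.

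So $D$ satisfies $\tilde{\Lap}D=g$ for some operator $\tilde{\Lap}$ that is a weighted Laplacian, possibly with nonnegative killing terms, on the edge array, with $g\ge 0$ and $g\not\equiv 0$. By the minimum principle, $D$ attains no negative minimum. By connectivity and the strong maximum principle, $D>0$ everywhere.

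\emph{Step 4: vertical differences.} The vertical differences follow identically, with the roles of $i$ and $j$ exchanged.

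\emph{Main obstacle.} The main obstacle is Step 3: verifying carefully that the derived equations for $D$ at the corners and sides really have the sign structure of an M-matrix, i.e.\ diagonally dominant with nonpositive off-diagonals. Degrees are $2$ at corners and $3$ on sides, so the coefficients are uneven.

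If this bookkeeping fails, the fallback is a probabilistic argument. Write $\phi_x-\phi_t$ as proportional to the expected net number of visits to $x$, or equivalently use $\phi_x\propto P_x(\text{hit } s\text{ before } t)$ after normalising by $\phi_s$ and $\phi_t$. Then couple simple random walks started at $(i,j)$ and $(i+1,j)$: run them in lockstep with mirrored horizontal steps until their horizontal coordinates meet. Throughout the coupling, the walk from $(i,j)$ remains weakly to the left. This gives $D\ge 0$. Strictness then follows from a positive-probability path that separates the two walks, e.g.\ the walk from $(i,j)$ reaching $s$ while the other reaches $t$.
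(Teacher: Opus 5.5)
Your main route is the paper's: the horizontal drops $\psi_{i,j}=\phi_{i,j}-\phi_{i+1,j}$ satisfy a discrete Poisson equation on the $m\times(n+1)$ array of horizontal edges, with a positive source only at the edge leaving $(0,0)$ and the edge entering $(m,n)$. A minimum principle plus connectivity then gives $\psi>0$. The step you flag as the main obstacle is a single subtraction. The Kirchhoff equation at $(i,j)$ minus the one at $(i+1,j)$ is, term by term, $d_j\psi_{i,j}=\psi_{i-1,j}+\psi_{i+1,j}+\psi_{i,j-1}+\psi_{i,j+1}+aq\,\eta_{i,j}$. Here $d_j\in\{3,4\}$, and every drop outside the array is set to $0$.

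However, your description of the sides is wrong, and this is exactly where the proof turns. At $i=0$ or $i=m-1$ the coefficient $d_j$ exceeds the number of genuine neighbours; for example, $4\psi_{0,j}=\psi_{1,j}+\psi_{0,j-1}+\psi_{0,j+1}$ for $0<j<n$. So those sides are absorbing (Dirichlet, i.e.\ killing), not reflecting, and only the rows $j=0$ and $j=n$ are Neumann. If all four sides were reflecting as you describe, your $\tilde{\Lap}$ would annihilate constants. Then $\tilde{\Lap}D=g$ with $g\ge 0$, $g\not\equiv 0$ would have no solution, and the minimum principle would give nothing. In the paper's argument, a negative minimum is propagated horizontally until it reaches $i=0$ or $i=m-1$, where the zero ghost drop gives the contradiction. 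The case $M=0$ is then excluded by propagating to one of the two source edges. Also, with resistance $a$ the source at $s$ is $2\phi_{0,0}-\phi_{1,0}-\phi_{0,1}=aq$, not $q/a$; this is harmless for signs.

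Your fallback coupling does not work as stated. Under mirrored horizontal steps, the walker from $(i,j)$ moving right while the other moves left takes the gap from $1$ to $-1$, so they swap sides instead of meeting. Hence the walker from $(i,j)$ does not stay weakly to the left. In discrete time the two walkers also cannot move vertically in lockstep when one is on the side $i=0$ (degree $3$) and the other is not (degree $4$).

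A synchronous coupling of the continuous-time walk does work. Put a rate-one clock on each edge; both walkers obey the same clocks, and a walker lacking the corresponding edge stays put. This keeps the vertical coordinates equal and $0\le x_B-x_A\le 1$. It follows that $\{B \text{ hits } s \text{ before } t\}\subseteq\{A \text{ hits } s \text{ before } t\}$, and hence $D\ge 0$. Once Step 3 is done correctly, though, you do not need this.
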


\begin{proof}
Consider the horizontal voltage drops
\[
\psi_{i,j}\ :=\ \phi_{i,j}-\phi_{i+1,j},
\qquad
0\leq i<m,\quad 0\leq j\leq n.
\] 
For convenience, set
\[
\psi_{-1,j}\ =\ \psi_{m,j} \ = \ \psi_{i, -1}\ =\ \psi_{i,n+1}\ =\ 0.
\]

Subtracting Kirchhoff's equation at $(i+1,j)$ from that at $(i,j)$ gives
\begin{equation}
\label{eq:mon}
d_j\psi_{i,j}
\ = \
\psi_{i-1,j}+\psi_{i+1,j}
+\psi_{i,j-1}+\psi_{i,j+1}
+aq\eta_{i,j},
\end{equation}
where 
\[
d_j\ :=\ 
\begin{cases}
4,&0<j<n,\\
3,&j=0\text{ or }j=n,
\end{cases}
\]
and $\eta_{i,j}=1$ when $(i,j)=(0,0)$ or $(m-1,n)$ and is $0$ otherwise. Thus each horizontal drop is an average of its neighboring drops, except for the positive term at the two corner edges.

Let $M$ be the minimum voltage drop
\[
M\ :=\ \min_{i,j}\psi_{i,j}.
\]

Suppose for the sake of contradiction that $M < 0$, and consider an edge attaining this
minimum. If this edge lies on a boundary, $i=0$ or $i = m-1$, each neighboring drop is at least $M$, one of which is $0$. Therefore \eqref{eq:mon} is impossible. 

However, if the edge doesn't lie on a boundary, equality in \eqref{eq:mon} requires, in particular, the two horizontally adjacent voltage drops to be $M$ as well. Repeating this argument until we reach a boundary again yields a contradiction. 

If $M = 0$, every term on the right-hand side of \eqref{eq:mon} is nonnegative, so equality forces each term to be 0. Repeating this argument until we reach a corner, where $\eta_{i,j} = 1$, gives a contradiction. Hence $M > 0$.

The same argument applies to vertical drops, and therefore
\[
\phi_{i,j}\ >\ \phi_{i,j+1}
\] as well. 
By Ohm's law, every base-grid edge consequently carries strictly positive current in its directed orientation.
\end{proof}

\subsection{Beckmann Potential of the Grid and the Directed Energy}
\label{sec:beckmann-grid}

Suppose that the additional directed edge $u \rightarrow v$ carries a nonnegative flow $z$ and has latency $\ell_{*}(x) = cx+d$. 

Every feasible flow pattern is a combination of paths from $s$ to $t$ totaling $q$ units, and directed flow cycles. Then, if $z > q$ such a directed cycle must exist through the added edge; reducing the flow uniformly around the cycle keeps the configuration feasible while strictly decreasing the potential. Therefore no equilibrium has $z > q$. Henceforth assume $0 \le z \le q$.

The chord diverts a flow of $z$ from $s$ to $t$ via $u\rightarrow v$. Hence the net injection vector that the grid must satisfy is
\[
p(z) := q(\ee_s-\ee_t)-z(\ee_u-\ee_v).
\]

Let $B$ be the directed incidence matrix of the grid. The quadratic energy at chord flow $z$ is the minimum dissipated power subject to routing this injection \emph{with nonnegative road flows}:
\begin{equation}
\label{eq:Edir}
\Edir(z)
\ :=\ 
\min\left\{ a\|f\|_2^2 : \ Bf = p(z),\ f\ge 0 \right\}
\end{equation}
We call $\Edir$ the \emph{orientation-constrained energy}. Unlike $\E(p(z))$, which minimizes over electrical currents on the undirected grid, $\Edir(z)$
restricts every edge flow to its prescribed nonnegative direction.

Note that the feasible set is nonempty for every $z \in [0,q]$. Indeed, we may take $q-z$ units of flow from $s$ to $t$, $z$ units from $s$ to $u$, and $z$ units from $v$ to $t$. The resulting net injection vector is \[p(z) \ =\  (q-z)(\ee_s - \ee_t) + z(\ee_s - \ee_u) + z(\ee_v - \ee_t).\]

The minimum is attained as the feasible flow set is closed, while $a\|f\|_2^2$ is continuous and tends to infinity as $\|f\|_2$ increases. Strict
convexity implies that the minimizing flow is unique. For each chord flow $z\in[0,q]$, let $f(z)$ be this minimum-energy grid flow pattern and $f_e(z)$ its flow on edge $e$, so that
\[
    \Edir(z)\ =\ \sum_{e \in E}af_e(z)^2 \ =\  a\|f(z)\|_2^2.
\]

We also define the electrical quantities
\begin{align*}
\label{eq:definitions}
V_{uv}\ &:=\ q(\ee_s-\ee_t)^{\mathsf T}\Lap^{+}(\ee_u-\ee_v) \\
R_{uv}&:=(\ee_u - \ee_v)^{\mathsf T}\Lap^+(\ee_u - \ee_v) \\
R_{st} &:= (\ee_s - \ee_t)^{\mathsf T}\Lap^+(\ee_s - \ee_t),\end{align*} where $V_{uv}$ is the voltage drop from $u$ to $v$ when current $q$ flows from $s$ to $t$, while $R_{uv}$ and $R_{st}$ denote the effective resistances between $u,v$ and $s,t$ respectively. All three quantities are taken on the underlying undirected $m \times n$ grid with edge resistance $a$.  

\begin{definition}[Flow regions]
For all $z\in[0,q]$, let
\[
E_+(z)\ :=\ \{e\in E:f_e(z)>0\}
\]
denote the set of original grid roads carrying strictly positive flow in the unique potential-minimizing flow pattern when the chord carries flow $z$. 

A \emph{flow region} is a maximal open interval contained in $(0,q)$ for which $E_+(z)$ is constant. For all $z$ in such a region, every edge in $E_+(z)$ carries positive flow while the
roads outside $E_+(z)$ remain at zero flow.

The \emph{first flow region} is then the interval $(0, \overline{z})$, where $\overline{z}$ is the first value at which a grid edge reaches 0 flow. We take $\overline{z} = q$ if no edge reaches 0 flow over $z \in [0,q]$. 
In particular, 
\[E_+(z)\ =\ E\qquad\text{for }0\le z<\overline{z}.\]
\end{definition}

The unconstrained electrical flow is affine, and hence continuous over $z$. Every original grid road carries strictly positive flow at $z=0$ by Lemma~\ref{lem:monotone}. Therefore the first region is nondegenerate. 

On the first region, the orientation-constrained problem reduces to the ordinary electrical minimum-energy problem with injection vector $p(z)$. The quadratic energy term for the grid then satisfies
\begin{equation}
\label{eq:quadenergy}
\Edir(z)
\ =\  \E(p(z)) \ =\  
\left[q(\ee_s-\ee_t)-z(\ee_u-\ee_v)\right]^\mathsf{T}
\Lap^+
\left[q(\ee_s-\ee_t)-z(\ee_u-\ee_v)\right].
\end{equation}
This becomes 

\begin{align*}
\Edir(z)
\ =
&\ q^2(\ee_s-\ee_t)^\mathsf{T}\Lap^+(\ee_s-\ee_t) \\
&-qz (\ee_s-\ee_t)^\mathsf{T}\Lap^+(\ee_u-\ee_v) \\
&-qz (\ee_u-\ee_v)^\mathsf{T}\Lap^+(\ee_s-\ee_t) \\
&+ z^2(\ee_u-\ee_v)^\mathsf{T}\Lap^+(\ee_u-\ee_v).
\end{align*}
We now substitute the expressions for $R_{st}, R_{uv}, V_{uv}$
to obtain
\[
\Edir(z)
\ =\ 
q^2R_{st}
-
2V_{uv} z
+
R_{uv}z^2.
\]

Beyond $\overline z$, the expression is only a lower bound. Since the directed feasible flows form a subset of all signed electrical flows,
\begin{equation}
\label{eq:relaxation-bound}
\Edir(z) \ \ge \  q^2R_{st}-2V_{uv}z+R_{uv}z^2
\qquad\text{for all }z.
\end{equation} We may also express $\Edir(z)$ exactly in terms of the graph Laplacian.

\begin{theorem}[Directed Energy]
\label{thm:structure}
The energy term $\Edir(z)$ satisfies the following.
\begin{enumerate}[label=(\roman*)] 
    \item $\Edir(z)$ is strictly convex over $[0,q]$ and quadratic on each flow region.
    \item On every interval where the subgraph $A = (V, E_+(z))$ of positive-flow roads is constant,
    \[
    \Edir(z) \ =\  p(z)^{\mathsf T} \Lap_A^{+}  p(z),
    \qquad p(z)\ =\ q(\ee_s-\ee_t)-z(\ee_u-\ee_v),
    \]
    where $\Lap_A$ denotes the Laplacian of $A$.
\end{enumerate}
\end{theorem}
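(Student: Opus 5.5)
The plan is to treat the two parts separately: strict convexity comes directly from the variational definition \eqref{eq:Edir}, and the closed form on a fixed support comes from a local perturbation argument that identifies the orientation-constrained minimizer with an ordinary electrical flow on the subgraph $A$.

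\textbf{Strict convexity.} Take $z_1\neq z_2$ in $[0,q]$ and $t\in(0,1)$, and let $f(z_1), f(z_2)$ be the unique minimizers. Since $p(z)$ is affine in $z$, the combination $g:=tf(z_1)+(1-t)f(z_2)$ satisfies $Bg=p(tz_1+(1-t)z_2)$ and $g\ge 0$, so it is feasible at $tz_1+(1-t)z_2$. Hence
\[
\Edir(tz_1+(1-t)z_2)\ \le\ a\|g\|_2^2\ <\ t\,a\|f(z_1)\|_2^2+(1-t)\,a\|f(z_2)\|_2^2 .
\]
The strict inequality uses strict convexity of $\|\cdot\|_2^2$ together with $f(z_1)\neq f(z_2)$. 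This holds because $Bf(z_1)-Bf(z_2)=-(z_1-z_2)(\ee_u-\ee_v)\neq 0$, as $u\neq v$. This gives strict convexity on all of $[0,q]$. The quadratic claim in (i) will follow from (ii), because on a flow region $\Lap_A$ is a fixed matrix and $p(z)$ is affine in $z$.

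\textbf{Formula on a fixed support.} Fix $z$ in an interval where $E_+(z)=E(A)$ is constant, and write $f=f(z)$. Let $g$ be the minimum-energy signed flow supported on $E(A)$ with $Bg=p(z)$; this is the electrical flow on $A$ with resistances $a$. I will show $f=g$ by perturbation. Suppose $g\neq f$ and set $h=f+\varepsilon(g-f)$. Off $E(A)$ both $f$ and $g$ vanish, so $h$ does too. On $E(A)$ we have $f_e>0$, so $h\ge 0$ for small $\varepsilon>0$. Thus $h$ is feasible for \eqref{eq:Edir}, and $a\|h\|_2^2<a\|f\|_2^2$ by strict convexity, since $a\|g\|^2\le a\|f\|^2$ and $g\neq f$. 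This contradicts optimality of $f$, so $f=g$.

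It remains to write the energy of $g$ in Laplacian form, as in the earlier computation $\E(p)=p^{\mathsf T}\Lap^+p$ via Thomson's Principle. Kirchhoff's law on $A$ reads $\Lap_A\phi=p(z)$, and the energy is $\phi^{\mathsf T}\Lap_A\phi=p(z)^{\mathsf T}\Lap_A^+p(z)$.

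\textbf{Main obstacle.} The delicate step is that $A$ may be disconnected or have isolated vertices, so $\Lap_A$ has a kernel larger than the constants. I will handle this by noting that $p(z)=Bf$ with $f$ supported on $E(A)$ forces the net injection on each connected component of $A$ to be zero. Hence $p(z)$ lies in the range of $\Lap_A$, which is the orthogonal complement of the component indicator vectors. Then $\phi=\Lap_A^+p(z)$ solves $\Lap_A\phi=p(z)$, and $\phi^{\mathsf T}\Lap_A\phi=p^{\mathsf T}\Lap_A^+\Lap_A\Lap_A^+p=p^{\mathsf T}\Lap_A^+p$. Finally I will note that $\Lap_A$ is independent of $z$ on the interval, so $z\mapsto p(z)^{\mathsf T}\Lap_A^+p(z)$ is a quadratic polynomial there, which completes (i).
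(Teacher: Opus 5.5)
Your proof is correct and follows essentially the same route as the paper. The paper likewise proves strict convexity from the feasibility of convex combinations of minimizers, and derives the Laplacian formula by reducing to the unconstrained electrical problem on the active subgraph $A$; it treats the disconnected case through zero net injection on each component, in a remark right after the proof. Your perturbation argument and the explicit check $Bf(z_1)-Bf(z_2)\neq 0$ make rigorous two steps that the paper only asserts briefly.
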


\begin{proof}

Take distinct $z_1,z_2\in[0,q]$ with optimizers $f(z_1),f(z_2)$, and let $r\in(0,1)$.
Because $B$ is linear,
\[
B\left(rf(z_1)+(1-r)f(z_2)\right)\ =\ r p(z_1)+(1-r) p(z_2)\ =\ p\left(rz_1+(1-r)z_2\right),
\]
and $rf(z_1)+(1-r)f(z_2)\ge0$, so this combination is feasible at $rz_1+(1-r)z_2$.
Hence
\begin{align*}
\Edir\left(rz_1+(1-r)z_2\right)
\ &\le \ a\|rf(z_1)+(1-r)f(z_2)\|_2^2 \\
&< \ ra\|f(z_1)\|_2^2+(1-r) a\|f(z_2)\|_2^2 \\
&= \ r\Edir(z_1)+(1-r)\Edir(z_2).
\end{align*}
The strict inequality holds because $z_1 \neq z_2$ and so $f(z_1)\ne f(z_2)$. Thus $\Edir$ is strictly convex.

Fix a flow region and let $A=(V,E_+(z))$ be its (constant) active subgraph.
Throughout the region every active edge carries strictly positive flow. The minimization therefore reduces to the unconstrained electrical problem on $A$, whose solution is the potential $\phi=\Lap_A^{+}p(z)$ with energy
\[
\Edir(z)\ =\ \phi^{\mathsf T}\Lap_A\phi\ =\ p(z)^{\mathsf T}\Lap_A^{+}p(z).
\]
Since $p(z)$ is affine in $z$, this is a quadratic in $z$. 
\end{proof}

\begin{remark}
    Although $A$ may be disconnected, this causes no problems. The total injection on each connected component of $A$ is zero. Thus, Kirchhoff's  equations are solvable on each component. Combining these solutions gives the canonical solution on $A$ shown above. 
\end{remark}

By Lemma~\ref{lem:linear_term}, the intercept contribution of the grid is $b(q(m+n)-kz)$. Hence the Beckmann Potential, expressed as a function of $z$, is
\begin{equation}
\label{eq:psi}
Z(z)
\ =\  \frac12 \Edir(z) + b\left(q(m+n) - kz\right) + \frac{c}{2}z^2+dz,
\qquad 0\le z\le q.
\end{equation} The equilibrium is found by minimizing $Z$ over all $0\le z \le q$. Let this minimum be achieved at $z = z^*$.

On the first region
\begin{equation}
\label{eq:first_potential}
Z(z) \ =\  \left(\frac{1}{2}R_{uv}+\frac{c}{2}\right)z^2- (V_{uv} + bk - d)z + \frac{1}{2}q^2R_{st} + bq(m+n).
\end{equation}

\subsection{Braess Criterion}
\label{sec:braess-crit}

We first identify when the new edge carries positive equilibrium flow.

\begin{corollary}[Edge usage]
\label{cor:use}
The additional edge has positive flow if and only if
\[
V_{uv}+bk\ >\ d.
\]
\end{corollary}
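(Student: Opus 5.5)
The plan is to study the potential $Z(z)$ from \eqref{eq:psi} as a function of the chord flow $z\in[0,q]$. By Theorem~\ref{thm:structure}(i), $\Edir$ is strictly convex, so $Z$ is strictly convex on $[0,q]$: the remaining terms are linear in $z$ or equal to $\frac c2 z^2$ with $c\ge 0$. The minimizer $z^*$ is therefore unique. For a convex function on an interval, the minimizer is $0$ if and only if the right derivative at $0$ is nonnegative. So the corollary reduces to showing that the right derivative $Z'(0^+)$ is negative exactly when $V_{uv}+bk>d$.

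To compute $Z'(0^+)$, I will use the first flow region. By Lemma~\ref{lem:monotone}, every grid edge carries strictly positive flow at $z=0$, so the first region $(0,\overline z)$ is nondegenerate with $\overline z>0$. On $[0,\overline z)$, $Z$ coincides with the quadratic \eqref{eq:first_potential}. I should also check that this identity holds at $z=0$ itself, since $\Edir(0)=q^2R_{st}$ when the unconstrained electrical flow is already nonnegative. Differentiating \eqref{eq:first_potential} at $0$ then gives
\[
Z'(0^+)\ =\ -(V_{uv}+bk-d).
\]

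Both directions follow from this. If $V_{uv}+bk>d$, then $Z'(0^+)<0$, so $Z(z)<Z(0)$ for small $z>0$. Hence $z^*\neq 0$, and the chord carries positive flow. Conversely, if $V_{uv}+bk\le d$, then $Z'(0^+)\ge 0$. Convexity gives $Z(z)\ge Z(0)+Z'(0^+)z\ge Z(0)$ for all $z\in[0,q]$, and strict convexity makes this inequality strict for $z>0$. So $z^*=0$.

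The step needing the most care is justifying that the supporting-line inequality from the first region applies globally even though $Z$ is only piecewise quadratic. Convexity of $Z$ on all of $[0,q]$, from Theorem~\ref{thm:structure}, handles this. An alternative is the relaxation bound \eqref{eq:relaxation-bound}: it shows $Z(z)$ dominates the first-region quadratic for every $z$, and in the converse case that quadratic is minimized at $0$ when its linear coefficient is nonpositive. I would mention both arguments for robustness.

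Finally, I need to confirm that $z^*$ from minimizing $Z$ over $[0,q]$ really is the equilibrium chord flow, so that "positive flow" means $z^*>0$. This follows from the Beckmann theorem together with the earlier reduction showing that no equilibrium has $z>q$.
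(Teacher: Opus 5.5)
Your proposal is correct and follows the paper's proof. Strict convexity of $Z$ from Theorem~\ref{thm:structure}(i) gives a unique minimizer, which is positive exactly when the right derivative $-(V_{uv}+bk-d)$ at $0$ is negative, and that derivative is read off the first-region quadratic \eqref{eq:first_potential}; your extra checks (that the first-region formula holds at $z=0$, and the relaxation-bound argument for the converse) are sound refinements of steps the paper leaves implicit.
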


\begin{proof}
By Theorem~\ref{thm:structure}(i), $\Edir$ is strictly convex, hence so is $Z$; it therefore has a unique minimizer $z^*$ on $[0,q]$. Therefore $z^*$ is positive if and only if the right-hand derivative at 0 is negative. This derivative is \[-V_{uv}-bk+d,\] by \eqref{eq:first_potential},  which is negative precisely when $V_{uv}+bk>d$. Again note that $V_{uv}$ is the \emph{original grid} voltage.
\end{proof}

\begin{corollary}[First-region equilibrium]
\label{cor:flow}
Let $z^*$ be the minimizer of $Z(z)$, and
\[
z_{\mathrm{cand}}\ :=\ \frac{V_{uv}+bk-d}{R_{uv}+c}.
\]
Then:
\begin{enumerate}[label=\textup{(\roman*)}]
\item if $z_{\mathrm{cand}} \le 0$ then the equilibrium is that of the base grid;
\item if $0<z_{\mathrm{cand}}\le \overline z$ then the equilibrium satisfies $z^*=z_{\mathrm{cand}}$;
\item if $\overline{z} = q < z_{\mathrm{cand}}$ then $z^* = q$;
\item if $\overline{z} < z_{\mathrm{cand}}$ and $\overline{z} < q$, the first-region formula does not determine $z^*$: the equilibrium does not lie strictly inside the first region.
\end{enumerate}
\end{corollary}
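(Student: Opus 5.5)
The whole argument rests on the strict convexity of $Z$ on $[0,q]$ together with the explicit quadratic form \eqref{eq:first_potential} on the first region. By Theorem~\ref{thm:structure}(i), $\Edir$ is strictly convex. The remaining terms of \eqref{eq:psi} are affine or convex in $z$, so $Z$ is strictly convex and has a unique minimizer $z^*\in[0,q]$. On $[0,\overline z]$, where $\Edir$ coincides with the electrical energy by \eqref{eq:quadenergy} (extended to the endpoint by continuity), $Z$ equals the quadratic $Q(z)=\tfrac12(R_{uv}+c)z^2-(V_{uv}+bk-d)z+\text{const}$. Since $R_{uv}>0$ for $u\neq v$ on the connected grid, $Q$ is strictly convex, and $Q'(z)=0$ exactly at $z_{\mathrm{cand}}$. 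On $[0,\overline z]$ we therefore have $Z'(z)=(R_{uv}+c)(z-z_{\mathrm{cand}})$, taken as one-sided derivatives at the endpoints.

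For case (i), $z_{\mathrm{cand}}\le0$ gives $V_{uv}+bk\le d$, so Corollary~\ref{cor:use} yields $z^*=0$. At $z=0$ the grid flow $f(0)$ is the base-grid minimizer, so the equilibrium coincides with that of the base grid.

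For case (ii), $z_{\mathrm{cand}}\in(0,\overline z]$ is a point where the one-sided derivatives of $Z$ satisfy $Z'_-(z_{\mathrm{cand}})\le0\le Z'_+(z_{\mathrm{cand}})$. The derivative from the left comes from the quadratic formula. If $z_{\mathrm{cand}}<\overline z$, the right derivative also comes from the formula and equals zero. If $z_{\mathrm{cand}}=\overline z$, I need more care, and this is the step I expect to be the main obstacle: past $\overline z$ the formula for $Z$ changes. I would handle it with the relaxation bound \eqref{eq:relaxation-bound}, which gives $Z(z)\ge Q(z)$ for all $z\in[0,q]$, with equality on $[0,\overline z]$. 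Then $Z(z)\ge Q(z)\ge Q(z_{\mathrm{cand}})=Z(z_{\mathrm{cand}})$ for every $z$. This avoids any derivative argument, and in fact gives case (ii) directly: $z_{\mathrm{cand}}$ is a global minimizer of $Z$, and uniqueness gives $z^*=z_{\mathrm{cand}}$.

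For case (iii), $\overline z=q$ means $Z=Q$ on all of $[0,q]$. $Q$ is strictly decreasing on $[0,z_{\mathrm{cand}}]\supset[0,q]$, so $z^*=q$.

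For case (iv), I only need to show that $z^*\notin(0,\overline z)$ and that the first-region formula alone does not pin it down. On $(0,\overline z]$ we have $Z'(z)=(R_{uv}+c)(z-z_{\mathrm{cand}})<0$, so $Z$ is strictly decreasing there and $z^*\ge\overline z$. Hence the equilibrium does not lie strictly inside the first region. Beyond $\overline z$, the quadratic $Q$ is only a lower bound, so its stationary point need not be the minimizer of $Z$; this is exactly the statement's claim that the first-region formula does not determine $z^*$.
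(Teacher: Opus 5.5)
Your proof is correct. Cases (i) and (iii) follow the paper: Corollary~\ref{cor:use} for (i), and monotonicity of the first-region quadratic for (iii). Case (ii), the crux, takes a different route.

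The paper splits (ii) into the subcases $z_{\mathrm{cand}}<\overline z$, $z_{\mathrm{cand}}=\overline z<q$ and $z_{\mathrm{cand}}=\overline z=q$, and argues locally. The left derivative of $Z$ vanishes at $z_{\mathrm{cand}}$, convexity gives $Z'_+(\overline z)\ge Z'_-(\overline z)=0$, and a point with $Z'_-\le 0\le Z'_+$ minimizes a convex function.

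You instead use the relaxation bound \eqref{eq:relaxation-bound} to get the global chain $Z(z)\ge Q(z)\ge Q(z_{\mathrm{cand}})=Z(z_{\mathrm{cand}})$ on all of $[0,q]$. This has three advantages:
\begin{itemize}
\item It handles every subcase at once.
\item It needs no one-sided derivative information about $Z$ past $\overline z$.
\item It gives uniqueness without Theorem~\ref{thm:structure}(i): strict convexity of $Q$ makes the second inequality strict for $z\neq z_{\mathrm{cand}}$.
\end{itemize}
The paper's local argument, by contrast, uses only convexity of $Z$ and never needs the relaxation bound. Both arguments rely on the same identification $Z=Q$ on the closed interval $[0,\overline z]$ by continuity at $\overline z$.

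In case (iv), you show that $Z$ is strictly decreasing on $[0,\overline z]$, so $z^*\ge\overline z$. This proves the claim that the equilibrium is not strictly inside the first region, which the paper only remarks on.
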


\begin{proof}

If $z_{\mathrm{cand}} \le 0$, we have $V_{uv} + bk-d \le 0$. By Corollary~\ref{cor:use} the additional edge must be unused. Therefore the Beckmann-minimizing flow is exactly the base-grid equilibrium.

Recall the first-region Beckmann Potential \[Z(z) \ =\  \left(\frac{1}{2}R_{uv}+\frac{c}{2}\right)z^2- (V_{uv} + bk - d)z + \frac{1}{2}q^2R_{st} + bq(m+n).\] The expression $\frac{V_{uv}+bk - d}{R_{uv}+c}$ is therefore the value of $z$ that minimizes this quadratic.

In case (ii) we first consider $z_{\mathrm{cand}} = \overline{z} = q$, so that $Z'_-(q)=0$. By convexity, $Z$ is nonincreasing up to $q$, so $q$ is the minimizer. Next, if $z_{\mathrm{cand}} = \overline{z} < q$, by continuity we may still use the first-region potential at $\overline{z}$, where it has left derivative equal to 0. Since $Z$ is continuous and convex,  
\[ Z'_+(\bar z)\ \ge\  Z'_-(\bar z)\ =\ 0. \]
Therefore $\bar z$ minimizes $Z$ on $[0,q]$.

If $0 < z_{\mathrm{cand}} < \overline{z},$ it is contained in the first region, so it is the global minimizer of $Z$.

In case (iii), the left-hand derivative of $Z$ is negative up to $\overline z$, so the minimum potential lies at exactly $z^* = q$. 
However, when $\overline{z} \neq q$ and $\overline{z} < z_{\mathrm{cand}}$ as in case (iv), the minimum may lie beyond $\overline{z}$. 
\end{proof}

We first record the closed-form change in total travel time when the equilibrium is interior to the first region, then establish a global worst-case inequality that governs all regions.

\begin{theorem}[First-region time change]
\label{thm:local-braess}
Consider an $m \times n$ grid with source $s = (0,0)$ and sink $t = (m,n)$ and edges of identical latency $\ell(x) = ax+b$. Add a directed edge $u \rightarrow v$ with latency $\ell_{*}(x) = cx+d$. Suppose the equilibrium chord flow satisfies
\[
0\ <\ z^*\ <\ \overline{z}.
\]
Then
\[
z^*\ =\ \frac{V_{uv}+bk-d}{R_{uv}+c}\]
and the difference in total travel times is \[
C_{\mathrm{new}}-C_{\mathrm{old}}\ =\ -V_{uv}z^*.\]
Such a chord induces Braess' Paradox if and only if $V_{uv}<0$.
\end{theorem}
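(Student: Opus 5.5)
The plan is to express both total costs through the common equilibrium path latency, and to compute that latency from the electrical potentials, which are available precisely because we are inside the first flow region. The formula for $z^*$ is immediate. Since $0<z^*<\overline z$, the minimizer of $Z$ lies in the interior of the first region. There $Z$ is the quadratic \eqref{eq:first_potential}, so $z^*$ is its stationary point $z_{\mathrm{cand}}$; equivalently, this is case (ii) of Corollary~\ref{cor:flow}.

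\textbf{Old cost.} At a User Equilibrium every used path has the same latency $\tau$, and every unit of demand uses some path. Hence $C=\sum_e f_e\ell_e(f_e)=q\tau$, so I will compute $\tau_{\mathrm{old}}$ and $\tau_{\mathrm{new}}$. For the base grid, Lemma~\ref{lem:monotone} shows that the electrical flow with injection $q(\ee_s-\ee_t)$ is strictly positive on every directed edge. Because $\sum_e f_e$ is constant by Observation~\ref{thm:linear-term}, this electrical flow is the equilibrium. Take $\phi=\Lap^+q(\ee_s-\ee_t)$ with resistance $a$. Ohm's law gives $af_e=\phi_x-\phi_y$ on each edge $x\to y$. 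Any directed $s$--$t$ path has exactly $m+n$ edges, so its latency telescopes to $\phi_s-\phi_t+b(m+n)=qR_{st}+b(m+n)$. Therefore
\[
C_{\mathrm{old}}\ =\ q^2R_{st}+bq(m+n).
\]

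\textbf{New cost.} For $z=z^*$ in the first region, Theorem~\ref{thm:structure}(ii) (with $A$ the full grid) shows that the grid flow is the electrical flow for the injection $p(z^*)$, with potential $\phi'=\Lap^+p(z^*)$. Again $af_e=\phi'_x-\phi'_y$ on every grid edge. Since $z^*<\overline z\le q$, the grid carries $q-z^*>0$ units from $s$ to $t$. Some used path therefore lies entirely in the grid, and $\tau_{\mathrm{new}}$ equals that path's latency. (I will also check that no-chord paths are among the used ones: every grid edge carries positive flow and the potentials exist, so every grid $s$--$t$ path has the same telescoped latency.) This gives
\[
\tau_{\mathrm{new}}\ =\ (\ee_s-\ee_t)^{\mathsf T}\Lap^+p(z^*)+b(m+n)\ =\ qR_{st}-\tfrac{z^*}{q}V_{uv}+b(m+n),
\]
using the definition of $V_{uv}$ and the symmetry of $\Lap^+$. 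Hence $C_{\mathrm{new}}=q^2R_{st}-z^*V_{uv}+bq(m+n)$. Subtracting the old cost gives $C_{\mathrm{new}}-C_{\mathrm{old}}=-V_{uv}z^*$. Since $z^*>0$, this difference is positive, i.e.\ $BR>1$, exactly when $V_{uv}<0$.

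\textbf{Main obstacle.} The delicate point is justifying that $C_{\mathrm{new}}=q\tau_{\mathrm{new}}$ with $\tau_{\mathrm{new}}$ read off a grid-only path. This needs the minimizer of $Z$ to be a genuine User Equilibrium, so that all used paths, including those through the chord, share one latency. It also needs a used path avoiding the chord, which the inequality $z^*<\overline z\le q$ supplies. The first point follows from the Beckmann theorem. Alternatively, it can be verified directly: stationarity of $Z$ at $z^*$ says the latency along $s\to u$, then the chord, then $v\to t$ equals $\tau_{\mathrm{new}}$. Indeed, $\frac{d}{dz}\tfrac12\Edir=-(\phi'_u-\phi'_v)$ and the grid linear term contributes $-bk$, so stationarity reads $cz^*+d=(\phi'_u-\phi'_v)+bk$.
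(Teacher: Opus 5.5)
Your proof is correct, but it reaches the cost identity by a different route than the paper. The paper never uses equal path latencies. It writes $C_{\mathrm{new}}=\Edir(z^*)+b(q(m+n)-kz^*)+c(z^*)^2+dz^*$, inserts the first-region quadratic $\Edir(z)=q^2R_{st}-2V_{uv}z+R_{uv}z^2$, and subtracts $C_{\mathrm{old}}$. It then collapses $(R_{uv}+c)(z^*)^2-(2V_{uv}+bk-d)z^*$ to $-V_{uv}z^*$ using the explicit value of $z^*$. This is a short algebraic computation whose only inputs are the closed form for $\Edir$ and stationarity. You instead compute the common travel time $\tau$ by telescoping potentials and use $C=q\tau$. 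The identity then appears as $q$ times the shift $-z^*V_{uv}/q$ in the $s$--$t$ potential difference, which you evaluate by the symmetry of $\Lap^+$ (reciprocity).

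What this buys is a stronger, more interpretable statement: every driver's travel time changes by exactly $-V_{uv}z^*/q$, so a first-region paradox makes every user worse off by the same amount. The price is the equilibrium bookkeeping you flag as the main obstacle: the Beckmann equivalence, no positive-latency flow cycles, and a used chord-free path.

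That bookkeeping can be removed entirely. Set $\pi_x:=\phi'_x+b\left(m+n-\lambda(x)\right)$. Ohm's law gives $\ell_e(f_e)=\pi_x-\pi_y$ on every grid edge $x\to y$. Your stationarity identity $cz^*+d=(\phi'_u-\phi'_v)+bk$ gives $\ell_*(z^*)=\pi_u-\pi_v$ on the chord. Flow conservation then yields $C_{\mathrm{new}}=\sum_e f_e(\pi_{\mathrm{tail}}-\pi_{\mathrm{head}})=q(\pi_s-\pi_t)$ directly, with no path decomposition needed.
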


\begin{proof}
The exact value of $z^*$ was proved in Corollary~\ref{cor:flow}.

The total cost on the new network is
\begin{align*}
    C_{\mathrm{new}} \ &=\  \sum_{e \in E} f_e(z^*)(af_e(z^*) + b)+z^*(cz^*+d) \\
    &= \ \Edir(z^*) + b\left(q(m+n) - kz^*\right) + c(z^*)^2+dz^*.
\end{align*}
Subtracting $C_{\mathrm{old}}=q^2R_{st}+bq(m+n)$ and using the first-region form $\Edir(z^*)=q^2R_{st}-2V_{uv} z^*+R_{uv}(z^*)^2$ gives
\[
C_{\mathrm{new}}-C_{\mathrm{old}}
\ =\ 
-2V_{uv} z^*+R_{uv}(z^*)^2
-bkz^*+c(z^*)^2+dz^* .
\]
Substituting the expression for $z^*$ yields $C_{\mathrm{new}}-C_{\mathrm{old}}=-V_{uv}\cdot z^*$. Braess' Paradox occurs when this difference is positive, corresponding to $V_{uv} < 0$. 
\end{proof}

The conditions $V_{uv}<0$ and $V_{uv}+bk>d$ are sufficient for the paradox only when the User Equilibrium remains interior to the first region. To obtain conclusions valid in all regions we use the following.

\begin{theorem}[Global travel-time change]
\label{thm:global-cost-bound}
Suppose the added edge carries equilibrium flow $z^*\in(0,q].$
Then the difference in total travel times is bounded by
\[
C_{\mathrm{new}}-C_{\mathrm{old}}
\ \le\ 
-V_{uv}z^*.
\]
Consequently, $V_{uv}<0$ is necessary for Braess' Paradox.
\end{theorem}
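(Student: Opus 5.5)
The plan is to write both costs via the potential \eqref{eq:psi}, use first-order optimality of $z^*$ to remove the chord and intercept terms, and reduce the claim to a trapezoid-rule inequality for $\Edir'$, which holds once $\Edir'$ is convex. The convexity of $\Edir'$ is the step I expect to be hardest.

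\textbf{Step 1: write the cost difference.} As in the proof of Theorem~\ref{thm:local-braess}, we have $C_{\mathrm{new}} = \Edir(z^*) + b(q(m+n)-kz^*) + c(z^*)^2 + dz^*$, and $C_{\mathrm{old}} = q^2R_{st} + bq(m+n) = \Edir(0) + bq(m+n)$. Hence
\[
C_{\mathrm{new}}-C_{\mathrm{old}} \ =\ \Edir(z^*)-\Edir(0) - bkz^* + c(z^*)^2 + dz^*.
\]

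\textbf{Step 2: use optimality to remove the chord terms.} Since $z^*\in(0,q]$ minimizes $Z$, the left derivative satisfies $Z'_-(z^*)\le 0$. That is, $\tfrac12\Edir'_-(z^*) - bk + cz^* + d \le 0$. Multiplying by $z^*>0$ gives $-bkz^*+c(z^*)^2+dz^* \le -\tfrac{z^*}{2}\Edir'_-(z^*)$.

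On the first region, $\Edir'_+(0) = -2V_{uv}$, so $-V_{uv}z^* = \tfrac{z^*}{2}\Edir'_+(0)$. It therefore suffices to prove
\[
\int_0^{z^*}\Edir'(s)\,ds \ \le\ \frac{z^*}{2}\left(\Edir'_+(0)+\Edir'_-(z^*)\right).
\]
Here the left side equals $\Edir(z^*)-\Edir(0)$, since a convex function is absolutely continuous. This is exactly the statement that the trapezoid rule overestimates the integral of $\Edir'$, which holds whenever $\Edir'$ is convex on $[0,z^*]$.

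\textbf{Step 3: show $\Edir'$ is convex.} This is the main obstacle. By Theorem~\ref{thm:structure}(ii), on each flow region with active subgraph $A$, $\Edir$ is quadratic with second derivative $2(\ee_u-\ee_v)^{\mathsf T}\Lap_A^+(\ee_u-\ee_v) = 2R^A_{uv}$, an effective resistance in $A$. So $\Edir'$ is piecewise linear. It is convex provided two things hold: $\Edir'$ has no jumps at region boundaries, and the slopes $R^A_{uv}$ are nondecreasing in $z$.

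For continuity, I would argue via KKT/duality for the strictly convex program \eqref{eq:Edir}. Its derivative in $z$ equals $2$ times the dual potential difference $-(\phi_u-\phi_v)$, which is unique because the optimal flow is unique and the dual depends continuously on $z$. For monotonicity of the slopes, the natural tool is Rayleigh monotonicity: deleting edges can only increase $R^A_{uv}$. That argument needs the active sets to be nested, i.e.\ $E_+(z)$ shrinks as $z$ increases, so that a road that drops to zero flow never reactivates.

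I would try to prove nestedness from the KKT conditions. Inactive edges $e=x\to y$ satisfy $\phi_x\le\phi_y$. Within a region, the flows on $A$ are affine in $z$, and the potentials move in the direction $-\Lap_A^+(\ee_u-\ee_v)$. I would try to show that the dual slack on an inactive edge cannot return to zero. If nestedness proves hard in general, a fallback is to compare slopes only between consecutive regions: at a boundary where one edge leaves, the new subgraph is a subgraph of the old one, so Rayleigh applies locally. If an edge instead enters at some boundary, the argument needs separate handling, and that case is where I expect the difficulty.

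\textbf{Step 4: conclude.} Combining Steps 2 and 3 gives $C_{\mathrm{new}}-C_{\mathrm{old}}\le -V_{uv}z^*$. If $V_{uv}\ge 0$, the right side is $\le 0$, so no paradox can occur. If the chord is unused ($z^*=0$), the equilibrium is that of the base grid and there is no change either. Hence $V_{uv}<0$ is necessary for Braess' Paradox.
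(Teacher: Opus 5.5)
Your Steps 1--2 are correct and match the paper's reduction. Since ${\Edir'}_+(0)=-2V_{uv}$, what must be shown is exactly
\[
\Edir(z)-\Edir(0)-\frac{z}{2}{\Edir'}_-(z)\ \le\ -V_{uv}z ,
\]
which is your endpoint trapezoid inequality. The gap is Step 3. Convexity of $\Edir'$ needs the curvatures $\Edir''=2R^A_{uv}$ to be nondecreasing across breakpoints. Your Rayleigh argument for this needs the active sets $E_+(z)$ to be nested. You do not prove that a road which drops to zero flow never reactivates, and nothing in the paper establishes it; its definition of flow regions does not assume it.

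The fallback you sketch cannot close this. Suppose a road $e=x\to y$ with both endpoints in the active component re-enters at a breakpoint. Afterwards $f_e$ has $z$-slope $\frac{1}{a+R^A_{xy}}(\ee_x-\ee_y)^{\mathsf T}\Lap_A^+(\ee_v-\ee_u)$, and this must be positive. Sherman--Morrison then gives
\[
R^{A\cup\{e\}}_{uv}\ =\ R^A_{uv}-\frac{\left((\ee_x-\ee_y)^{\mathsf T}\Lap_A^+(\ee_u-\ee_v)\right)^2}{a+R^A_{xy}}\ <\ R^A_{uv}.
\]
So at every re-entry the slope of $\Edir'$ strictly drops and $\Edir'$ is genuinely not convex there. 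Step 3 therefore rests on an unproven, nontrivial claim that re-entries never occur. A smaller point: the dual potentials need not be unique. What is unique is $\phi_u-\phi_v$, because $u$ and $v$ lie in the same component of the active subgraph for $0<z<q$.

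The missing idea is that you never need convexity of $\Edir'$, only the endpoint inequality. That follows by comparing $\Edir$ with a quadratic majorant that touches it at both ends.

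Fix $z\in(0,q]$ and, for $t\in[0,z]$, set $g(t)=f(0)+\frac{t}{z}\left(f(z)-f(0)\right)$ and $h(t)=a\|g(t)\|_2^2$.
\begin{enumerate}[label=(\alph*)]
\item Since $g(t)\ge 0$ and $Bg(t)=p(t)$, we have $h\ge\Edir$ on $[0,z]$, with equality at $t=0$ and $t=z$. Hence $h'(z)\le{\Edir'}_-(z)$.
\item Because $h$ is quadratic, the trapezoid rule is exact for it: $h(z)-h(0)=\frac{z}{2}\left(h'(0)+h'(z)\right)$.
\item Using $B^{\mathsf T}\phi_0=af(0)$ and $B(f(0)-f(z))=z(\ee_u-\ee_v)$, we get $h'(0)=\frac{2a}{z}f(0)^{\mathsf T}(f(z)-f(0))=-2V_{uv}$.
\end{enumerate}
Chaining these gives $\Edir(z)-\Edir(0)\le\frac{z}{2}\left(-2V_{uv}+{\Edir'}_-(z)\right)$, which is what your Step 2 needs.

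This is precisely the paper's argument; it tests the flow $(1-\varepsilon)f(z)+\varepsilon f(0)$ at chord flow $(1-\varepsilon)z$. It uses only feasibility of convex combinations of directed flows and the voltage identity, with no information about which roads are active.
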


\begin{proof}

The conservation constraints give
\[
    B(f(0)-f(z))\ =\ p(0)-p(z)\ =\ z(\ee_u-\ee_v).
\]
Let
\[
    \phi_0\ =\ q\Lap^+(\ee_s-\ee_t)
\]
be the base-grid potential. Clearly $f(0)$ is simply the unconstrained electrical flow on the base grid. Since
\[
    V_{uv}\ =\ \phi_0^\mathsf{T}(\ee_u-\ee_v)
    \qquad\text{and}\qquad
    B^\mathsf{T}\phi_0\ =\ af(0),
\]
we have
\begin{equation}
    zV_{uv}
    \ =\ \phi_0^\mathsf{T}B(f(0)-f(z))
    \ =\ a f(0)^\mathsf{T}(f(0)-f(z)).
    \label{eq:voltage-flow}
\end{equation}

We next bound the left derivative of $\Edir$. For $0<\varepsilon<1$, the flow
\[
    (1-\varepsilon)f(z)+\varepsilon f(0)
\]
is nonnegative and satisfies
\[
\begin{aligned}
    B\left((1-\varepsilon)f(z)+\varepsilon f(0)\right)
    \ &=\ (1-\varepsilon)p(z)+\varepsilon p(0) \\
    &= \ p\left((1-\varepsilon)z\right).
\end{aligned}
\]
It is therefore feasible at chord flow $(1-\varepsilon)z$. By the minimality of
$f\left((1-\varepsilon)z\right)$,
\[
\begin{aligned}
    \Edir\left((1-\varepsilon)z\right)
    \ &\leq\ 
    a\left\|(1-\varepsilon)f(z)+\varepsilon f(0)\right\|_2^2 \\
    \ &=\    
    \Edir(z)
    +2a\varepsilon f(z)^\mathsf{T}(f(0)-f(z))
    +a\varepsilon^2\|f(0)-f(z)\|_2^2.
\end{aligned}
\]
Rearranging, dividing by $\varepsilon z$, and letting $\varepsilon\rightarrow0^+$ gives
\[
    {\Edir'}_-(z)
    \ \geq\ 
    \frac{2a}{z}f(z)^\mathsf{T}(f(z)-f(0)).
\]
Consequently,
\begin{align}
    \Edir(z)-\Edir(0)-\frac{z}{2}{\Edir'}_-(z)
    \ &\leq\ 
    a\|f(z)\|_2^2-a\|f(0)\|_2^2
    -a f(z)^\mathsf{T}(f(z)-f(0)) \notag\\
    \ &=\ 
    a f(0)^\mathsf{T}(f(z)-f(0)) \notag\\
    \ &=\ 
    -V_{uv}z,
    \label{eq:energy-bound}
\end{align}
where the final equality follows from \eqref{eq:voltage-flow}.

Since $z^*>0$ minimizes $Z$ on $[0,q]$, its left derivative satisfies
\[
    Z'_-(z^*)
    \ =\ 
    \frac{1}{2}{\Edir'}_-(z^*)-bk+cz^*+d
    \ \leq\  0.
\]
This condition applies whether $z^*<q$ or $z^*=q$. Therefore,
\[
    -bk+cz^*+d
    \ \leq\ 
    -\frac{1}{2}{\Edir'}_-(z^*).
\]
Using the expression for the change in total travel time,
\[
\begin{aligned}
    C_{\mathrm{new}}-C_{\mathrm{old}}
    \ &=\ 
    \Edir(z^*)-\Edir(0)
    +z^*\left(-bk+cz^*+d\right)\\
    \ &\leq\ 
    \Edir(z^*)-\Edir(0)
    -\frac{z^*}{2}{\Edir'}_-(z^*)\\
    \ &\leq\ 
    -V_{uv}z^*,
\end{aligned}
\]
where the final inequality follows from \eqref{eq:energy-bound}.

Since Braess' Paradox requires
\[
    C_{\mathrm{new}}-C_{\mathrm{old}}\ >\ 0
\]
and $z^*>0$, it follows that $V_{uv}<0$.
\end{proof}

The above shows why a complete criterion for fixed latency parameters is difficult. Once the chord flow passes beyond the first flow region, some original roads become inactive, the electrical formula for $\Edir(z)$ is no longer exact, and the sign condition $V_{uv} < 0$ alone does not determine whether the total travel time increases. However, we may find a necessary and sufficient condition for an added edge to induce Braess' Paradox over \emph{some} choice of $b,c,d \ge 0$. 

\begin{theorem}[Braess capability]
\label{thm:capability}
Fix a directed grid $G_{m,n}$, slope $a>0$ and demand $q > 0$. Call an additional edge $u \rightarrow v$ \emph{Braess-capable} if there exist $b,c,d \ge 0$ such that, if the grid edges have latency function $ax+b$ and the added edge has latency function $cx+d$, the resulting Braess Ratio exceeds 1. 

Then $u \rightarrow v$ is Braess-capable if and only if
\[
    V_{uv}\ <\ 0 \quad\text{and}\quad k \ > \ 0.
\]
\end{theorem}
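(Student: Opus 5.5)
We argue the two directions separately.

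\emph{Necessity.} Suppose $u\rightarrow v$ is Braess-capable, and fix $b,c,d\ge 0$ with $BR>1$. The chord must then carry positive equilibrium flow: if $z^*=0$ the equilibrium coincides with the base-grid equilibrium and $BR=1$. Hence $z^*\in(0,q]$, and Theorem~\ref{thm:global-cost-bound} gives $C_{\mathrm{new}}-C_{\mathrm{old}}\le -V_{uv}z^*$, so $V_{uv}<0$.

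For $k>0$, suppose instead $k\le 0$. By Corollary~\ref{cor:use} the chord is used only if $V_{uv}+bk>d\ge 0$. With $k\le 0$ and $b\ge0$ this forces $V_{uv}>0$, contradicting $V_{uv}<0$. Hence $k>0$.

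\emph{Sufficiency.} Assume $V_{uv}<0$ and $k>0$. The plan is to choose coefficients so that the equilibrium lies strictly inside the first region, and then apply Theorem~\ref{thm:local-braess}, which gives $C_{\mathrm{new}}-C_{\mathrm{old}}=-V_{uv}z^*>0$.

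The difficulty is that $\overline z$ is determined by $f(z)$, which depends only on $a$, $q$ and the chord endpoints, not on $b,c,d$. This holds because $\Edir$ and its minimizers $f(z)$ involve only $a$ and $p(z)$. So $\overline z>0$ is a fixed number; Lemma~\ref{lem:monotone} and continuity give nondegeneracy of the first region.

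It therefore suffices to make $z_{\mathrm{cand}}=(V_{uv}+bk-d)/(R_{uv}+c)$ lie in $(0,\overline z)$. Take $c=d=0$ and choose $b$ with
\[
0\ <\ V_{uv}+bk\ <\ R_{uv}\,\overline z .
\]
Since $k>0$, the map $b\mapsto V_{uv}+bk$ is increasing and continuous, starting from $V_{uv}<0$ at $b=0$. Hence any $b$ slightly above $-V_{uv}/k>0$ works, and it is nonnegative. Here $R_{uv}>0$ because $u\ne v$ on a connected grid. With this $b$, Corollary~\ref{cor:flow}(ii) gives $z^*=z_{\mathrm{cand}}\in(0,\overline z)$. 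Theorem~\ref{thm:local-braess} then yields $C_{\mathrm{new}}-C_{\mathrm{old}}=-V_{uv}z^*>0$, so $BR>1$.

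The main obstacle is exactly this point: justifying that $\overline z$ is independent of $b,c,d$, so that $\overline z$ can be fixed before $b$ is tuned. If that independence proved subtle, the fallback would be to take $c$ large, which shrinks $z_{\mathrm{cand}}$ toward $0$, while holding $V_{uv}+bk-d>0$ fixed.
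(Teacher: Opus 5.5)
Your proposal is correct and follows the paper's proof essentially step for step. Necessity uses the global cost bound together with the edge-usage criterion. Sufficiency takes $c=d=0$ and $b$ just above $-V_{uv}/k$, so that $z_{\mathrm{cand}}\in(0,\overline z)$, and then applies the first-region formula $C_{\mathrm{new}}-C_{\mathrm{old}}=-V_{uv}z^*$. You also say explicitly that $\overline z$ depends only on $a$, $q$, $u$, $v$ and not on $b,c,d$, and that $z^*=0$ forces $BR=1$. The paper uses both points without stating them, and your justification is sound, so the large-$c$ fallback is unnecessary.
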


\begin{proof}
First suppose that the chord induces Braess' Paradox for some
$b,c,d\geq 0$. We have already shown $V_{uv} < 0$. It remains to show that $k > 0$. 

By the edge-usage criterion, positive
chord flow requires
\[
    V_{uv}+bk\ >\ d.
\]
Suppose instead that $k\leq 0$. Since $b\geq 0$,
\[
    bk\ \leq\  0.
\]
Together with $V_{uv}<0$, this gives
\[
    V_{uv}+bk<0\ \leq\  d,
\]
which is impossible. This proves the necessity.

Conversely, suppose that
\[
    V_{uv}\ <\ 0
    \qquad\text{and}\qquad
    k\ >\ 0.
\]

Choose any $\varepsilon$ satisfying
\[
    0\ <\ \varepsilon\ <\ R_{uv}\overline{z}.
\] 
Here $R_{uv}$ must be positive as $u \neq v$. 

Set
\[
    c\ =\ 0,
    \qquad
    d\ =\ 0,
    \qquad
    b\ =\ \frac{-V_{uv}+\varepsilon}{k}
\]
so that
\begin{align*}
    V_{uv}+bk-d
    \ &= \ 
    V_{uv}
    +
    k\left(\frac{-V_{uv}+\varepsilon}{k}\right) \\
    \ &=\ 
    \varepsilon
    >0.
\end{align*}
Hence the chord attracts positive flow. Its first-region equilibrium
candidate is
\[
    z_{\mathrm{cand}}
    \ = \
    \frac{V_{uv}+bk-d}{R_{uv}+c}
    \ = \ 
    \frac{\varepsilon}{R_{uv}}.
\]
As this expression is strictly less than $\overline{z}$, the equilibrium lies inside the first region, and
\[
    z^* \ = \ \frac{\varepsilon}{R_{uv}}.
\]
Theorem~\ref{thm:local-braess} now gives
\[
    C_{\mathrm{new}}-C_{\mathrm{old}}
    \ =\ 
    -V_{uv}z^*
    \ =\ 
    \frac{(-V_{uv})\varepsilon}{R_{uv}}
    \ >\ 0.
\]
Thus the chord induces Braess' Paradox.
\end{proof}

\section{Consequences of the Braess Criterion}
\label{sec:consequences}
\subsection{Harmful Chords}
Let the additional edge $u\rightarrow v$ be a \emph{shortcut} if its endpoints satisfy
\[
i'\ \geq\  i
\qquad\text{and}\qquad
j'\ \geq\  j,
\]
where $u=(i,j)$ and $v=(i',j')$. By repeatedly applying Lemma~\ref{lem:monotone}, such a chord has a positive voltage drop $V_{uv}$ and therefore does not satisfy the Braess criterion. 

\begin{observation}
\label{obs:shortcut}
A shortcut chord cannot induce Braess' Paradox, nor can any chord with level gain $k\le0$. Hence only chords moving forward in one coordinate and backward in another can produce the paradox.
\end{observation}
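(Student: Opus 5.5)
The plan is to reduce everything to Theorem~\ref{thm:capability}, which says a chord $u\rightarrow v$ can induce the paradox for some $b,c,d\ge 0$ only if $V_{uv}<0$ and $k>0$. The two claims of the observation correspond to these two conditions failing. A shortcut should fail the voltage condition, and a chord with $k\le 0$ fails the level-gain condition by definition. The final sentence will then follow by a short case analysis on the signs of the coordinate differences $i'-i$ and $j'-j$.

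\textbf{Shortcuts.} Let $u=(i,j)$ and $v=(i',j')$ with $i'\ge i$ and $j'\ge j$. Since $u\neq v$, at least one inequality is strict. I would build a monotone directed grid path from $u$ to $v$: first take $i'-i$ horizontal edges, then $j'-j$ vertical edges. Writing $\phi=\Lap^+q(\ee_s-\ee_t)$, the voltage drop telescopes along this path:
\[
V_{uv}\ =\ \phi_u-\phi_v\ =\ \sum_{\text{edges } x\to y \text{ on path}}(\phi_x-\phi_y).
\]
By Lemma~\ref{lem:monotone}, every summand is strictly positive, and the path has at least one edge, so $V_{uv}>0$. This violates the necessary condition $V_{uv}<0$ from Theorem~\ref{thm:global-cost-bound}, so a shortcut is never Braess-capable.

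\textbf{Chords with $k\le 0$.} These are excluded directly by the necessity direction of Theorem~\ref{thm:capability}. The underlying reason is that $V_{uv}<0$ and $bk\le 0$ together give $V_{uv}+bk<0\le d$, so by Corollary~\ref{cor:use} the chord carries no flow.

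\textbf{Final claim.} A Braess-capable chord is not a shortcut, so at least one coordinate difference is negative. Suppose both were nonpositive. Then $k=(i'-i)+(j'-j)\le 0$, which we have just excluded. Hence exactly one coordinate decreases and the other must increase, since $k>0$ and the sum must be positive. So the chord moves forward in one coordinate and backward in the other.

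I expect no real obstacle. The only points needing care are to use a \emph{directed} path, so that Lemma~\ref{lem:monotone} applies edge by edge, and to note that $u\ne v$ guarantees the path has at least one edge, so the inequality is strict. The observation only states the negative results, but the strict inequality is what rules out $V_{uv}=0$.
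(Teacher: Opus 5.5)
Your proposal is correct and follows the paper's own argument. Telescoping Lemma~\ref{lem:monotone} along a monotone directed path from $u$ to $v$ gives $V_{uv}>0$ for shortcuts, the necessity direction of Theorem~\ref{thm:capability} excludes chords with $k\le 0$, and the sign case analysis gives the final sentence; you simply spell out the details (a directed path, and strictness because $u\neq v$) that the paper compresses into ``by repeatedly applying Lemma~\ref{lem:monotone}.''
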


The edge-usage condition gives a threshold in the total demand. Define the
unit-demand voltage coefficient
\[
\gamma_{uv}
\ :=\ 
(\ee_s-\ee_t)^{\mathsf T}\Lap^+(\ee_u-\ee_v),
\]
so that
\[
V_{uv}\ =\ q\gamma_{uv}.
\]

\begin{corollary}[Demand threshold]
\label{cor:demand-threshold}
Suppose $\gamma_{uv}<0$. If $bk\le d$, then the chord is unused for
every demand $q>0$. If $bk>d$, then the chord carries positive flow
if and only if
\[
q\ <\ 
\frac{bk-d}{-\gamma_{uv}}.
\]
Consequently, a chord with negative original-grid voltage cannot
induce Braess' Paradox once the demand reaches this threshold.
\end{corollary}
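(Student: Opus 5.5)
The argument is a direct specialisation of Corollary~\ref{cor:use} (Edge usage), together with the substitution $V_{uv}=q\gamma_{uv}$. Note first that $\gamma_{uv}$ is defined on the base grid with resistance $a$ and does not depend on $q$. The only $q$-dependence in the usage criterion therefore comes through $V_{uv}$, which is linear in $q$. Corollary~\ref{cor:use} says the chord carries positive equilibrium flow if and only if $V_{uv}+bk>d$, that is,
\[
q\gamma_{uv}+bk\ >\ d.
\]
This holds for every $q>0$ and every choice of $b,c,d\ge 0$, because its proof used only the strict convexity of $Z$ (Theorem~\ref{thm:structure}(i)) and the right derivative at $0$ computed from \eqref{eq:first_potential}. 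That first-region formula is valid at $z=0^+$ for every $q>0$, since Lemma~\ref{lem:monotone} gives strictly positive base flows for every $q>0$.

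I then split into two cases according to the sign of $bk-d$.

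\emph{Case $bk\le d$.} Since $\gamma_{uv}<0$ and $q>0$, we get $q\gamma_{uv}+bk<bk\le d$. The usage inequality therefore fails for every $q>0$, and the chord is unused.

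\emph{Case $bk>d$.} Rearranging $q\gamma_{uv}>d-bk$ and dividing by the negative number $\gamma_{uv}$ reverses the inequality. This gives $q<(bk-d)/(-\gamma_{uv})$, which is the stated threshold.

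For the final sentence, suppose $q$ is at or above the threshold (in the second case), or $q$ is arbitrary (in the first case). In either situation the chord carries zero equilibrium flow. Then $z^*=0$ minimises $Z$, so the grid flow is $f(0)$, the base-grid equilibrium, and $C_{\mathrm{new}}=C_{\mathrm{old}}$. Hence $BR=1$ and no paradox occurs. Alternatively, one can note that Theorem~\ref{thm:global-cost-bound} requires $z^*>0$ for any increase.

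There is no real obstacle here. The only point needing care is that Corollary~\ref{cor:use} holds uniformly in $q$, and that is justified by the remark above about Lemma~\ref{lem:monotone} at $z=0^+$.
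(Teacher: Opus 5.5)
Your proposal is correct and takes the same route as the paper. Both substitute $V_{uv}=q\gamma_{uv}$ into the edge-usage criterion of Corollary~\ref{cor:use}, split on the sign of $bk-d$, and observe that an unused chord leaves the base-grid equilibrium unchanged; your remark that the criterion holds uniformly in $q$ (via Lemma~\ref{lem:monotone}) is extra care that the paper leaves implicit.
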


\begin{proof}
By Corollary~\ref{cor:use}, the chord is used precisely when
\[
q\gamma_{uv}+bk \ >\ d.
\]
If $bk\le d$, this is impossible because $\gamma_{uv}<0$. If $bk>d$,
rearranging gives
\[
q\ <\ 
\frac{bk-d}{-\gamma_{uv}}.
\]
An unused chord does not alter the equilibrium and therefore cannot
induce the paradox.
\end{proof}

The first-region formula also gives an exact parameter range in which
the paradox is completely determined. 

\begin{corollary}[First-region parameter window]
\label{cor:first-region-window}
Suppose $V_{uv}<0$ and $k>0.$
The equilibrium flow lies in the first region
\[
0<z^*<\overline{z}
\]
if and only if
\[
d-V_{uv}
\ <\ 
bk
\ <\ 
d-V_{uv}+(R_{uv}+c)\overline{z}.
\]
Whenever these inequalities hold, the chord induces Braess' Paradox,
and
\[
C_{\mathrm{new}}-C_{\mathrm{old}}
\ =\ 
\frac{(-V_{uv})(V_{uv}+bk-d)}{R_{uv}+c}.
\]
\end{corollary}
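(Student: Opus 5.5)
The plan is to reduce the two-sided window to the case analysis of Corollary~\ref{cor:flow}, with the quantity
\[
\Delta \ :=\ V_{uv}+bk-d
\]
as the pivot. Since $R_{uv}>0$ and $c\ge 0$, the candidate $z_{\mathrm{cand}}=\Delta/(R_{uv}+c)$ lies in $(0,\overline z)$ exactly when
\[
0\ <\ \Delta\ <\ (R_{uv}+c)\overline z .
\]
Rearranged, this is the stated chain $d-V_{uv}<bk<d-V_{uv}+(R_{uv}+c)\overline z$. So the whole statement reduces to showing that $0<z^*<\overline z$ holds if and only if $0<z_{\mathrm{cand}}<\overline z$.

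For the direction ($\Leftarrow$), assume $0<z_{\mathrm{cand}}<\overline z$. Corollary~\ref{cor:flow}(ii) then gives $z^*=z_{\mathrm{cand}}$, which lies strictly inside $(0,\overline z)$. For the direction ($\Rightarrow$), assume $0<z^*<\overline z$. On the open first region, $Z$ equals the quadratic \eqref{eq:first_potential}. An interior minimizer of $Z$ on $[0,q]$ is a local minimizer, so it is a critical point of this smooth quadratic. The only critical point is $z_{\mathrm{cand}}$, hence $z^*=z_{\mathrm{cand}}\in(0,\overline z)$.

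One could instead run through the cases of Corollary~\ref{cor:flow}, but a small subtlety makes the critical-point argument cleaner. Case (ii) permits $z_{\mathrm{cand}}=\overline z$, which gives $z^*=\overline z$, outside the open region. This is exactly why the right inequality must be strict. Cases (i), (iii) and (iv) each give $z^*\le 0$ or $z^*\ge\overline z$. Case (iv) is the awkward one, since there Corollary~\ref{cor:flow} says only that the equilibrium is not strictly inside the first region; that conclusion is still what we need, and the critical-point argument handles it directly. Note that strict convexity of $Z$, from Theorem~\ref{thm:structure}(i), makes $z^*$ unique, so ``$z^*$'' is well defined throughout.

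With $z^*=z_{\mathrm{cand}}$ established, Theorem~\ref{thm:local-braess} applies and gives
\[
C_{\mathrm{new}}-C_{\mathrm{old}}\ =\ -V_{uv}z^*\ =\ \frac{(-V_{uv})(V_{uv}+bk-d)}{R_{uv}+c}.
\]
Both factors in the numerator are positive, since $V_{uv}<0$ and $\Delta>0$, and the denominator is positive. So the difference is positive and the paradox occurs. The hypothesis $k>0$ is needed only so that the window is phrased in terms of $bk$ in a meaningful way; it enters no estimate. I expect no real obstacle here. The only point that needs care is the boundary behaviour at $\overline z$, which the strict inequalities and the interior-critical-point argument resolve.
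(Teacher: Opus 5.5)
Your proposal is correct and follows the paper's route. You pivot on $V_{uv}+bk-d$, identify $z^*$ with $z_{\mathrm{cand}}$ so that $0<z^*<\overline z$ becomes the stated window, and then read off the cost change and its sign from Theorem~\ref{thm:local-braess}. The paper simply asserts $z^*=z_{\mathrm{cand}}$ ``in the first flow region,'' whereas you justify both directions explicitly, using Corollary~\ref{cor:flow}(ii) for one and the interior-critical-point argument for the other. This is a welcome tightening of the same argument, not a different approach.
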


\begin{proof}
In the first flow region,
\[
z^*
\ =\ 
\frac{V_{uv}+bk-d}{R_{uv}+c}.
\]
Since $R_{uv}+c>0$, the condition
\[
0\ <\ z^*\ <\ \overline{z}
\]
is equivalent to
\[
0\ <\ V_{uv}+bk-d<(R_{uv}+c)\overline{z},
\]
which gives the displayed inequalities. Because $V_{uv}<0$,
Theorem~\ref{thm:local-braess} shows that the chord induces
Braess' Paradox. Substituting the formula for $z^*$ into
\[
C_{\mathrm{new}}-C_{\mathrm{old}}
\ =\ 
-V_{uv}z^*
\]
gives the result.
\end{proof}

\subsection{Prevalence}

Probabilistic, asymptotic prevalence results on Braess' Paradox are known for random-graph and expander models \cite{valiantroughgarden2010, chungyoungzhao2012}; the grid structure permits an exact count. Specifically, we calculate the proportion of chords $u \rightarrow v$ with $V_{uv} \ <\  0$ and $k \ >\  0$ for all grids with $1 \le m,n \le 100$. 

Scaling $a$ and $q$ by constant factors does not change the sign of the desired quantities; we assume $a = q = 1$. 

Using the spectral decomposition of the grid Laplacian $\Lap$, we compute the single voltage vector $\phi = \Lap^+ (\ee_s - \ee_t)$. All voltage drops are then obtained from $V_{uv} = \phi_u - \phi_v$. This allows us to calculate the matrix of voltage drops more efficiently than building the Laplacian and using built-in pseudoinverse functions.

The path graph on $N$ vertices is denoted by $P_N$. Its Laplacian $\Lap_N$ \cite{spielman2019} has eigenvalues
$\lambda_r = 4\sin^{2}\left(\frac{\pi r}{2N}\right)$, $r=0,\dots,N-1$, with
orthonormal eigenvectors given by
\[
  \psi_r(i)
  \ =\  \sqrt{\frac{2-\delta_{r}}{N}}
    \cos\left(\frac{\pi r (i+\frac12)}{N}\right),
  \qquad i=0,\dots,N-1.
\] 

Here \[\delta_r = \begin{cases}
    1 \qquad \text{ if } r = 0 \\
    0 \qquad \text{ if } r \neq 0.
\end{cases}\]The grid $G_{m,n}$ is a Cartesian
product of paths $P_{m+1} \square P_{n+1}$, so its Laplacian is the Kronecker sum \cite{merris1998}
$\Lap = \Lap_{m+1}\otimes I_{n+1} + I_{m+1}\otimes \Lap_{n+1}$. If
$(\psi_{r_1},\lambda_{r_1})$ and $(\xi_{r_2},\mu_{{r_2}})$ are the eigenpairs of $\Lap_{m+1}$ and
$\Lap_{n+1}$ respectively, then the eigenpairs of $\Lap$ are $(\psi_{r_1}\otimes\xi_{r_2}, \lambda_{r_1}+\mu_{r_2})$ over all pairs $(r_1,r_2)$. The pseudoinverse inverts the nonzero eigenvalues while the $(r_1,r_2)=(0,0)$ term is set to zero; writing
\[  \Gamma_{r_1,r_2} \ :=\ 
  \begin{cases}
    \left(\lambda_{r_1}+\mu_{r_2}\right)^{-1}, & (r_1,r_2)\neq(0,0),\\
    0, & (r_1,r_2)=(0,0),
  \end{cases}
\] we have
\[ 
  \Lap^{+}
  \ =\  \sum_{r_1=0}^{m}\sum_{r_2=0}^{n}
      \Gamma_{r_1,r_2} 
      \left(\psi_{r_1}\otimes\xi_{r_2}\right)\left(\psi_{r_1}\otimes\xi_{r_2}\right)^{\mathsf{T}}.
\]
This lets us compute all possible voltage drops in $O\left(|V|^2\right)$ time. There are many chords where $V_{uv} = 0$; occasionally these drops are returned as residuals of magnitude $10^{-14}$ due to floating-point errors. Meanwhile, chords genuinely capable of Braess' Paradox carry drops many orders of magnitude larger. Therefore we treat voltage drops with absolute value at most $10^{-12}$ as zero; the classification is unchanged for any threshold in $[10^{-14}, 10^{-10}]$. 

The full algorithm is available in the accompanying repository \cite{LuBraessCode}. 

\begin{figure}[H]
  \centering
  \includegraphics[width=0.8\linewidth]{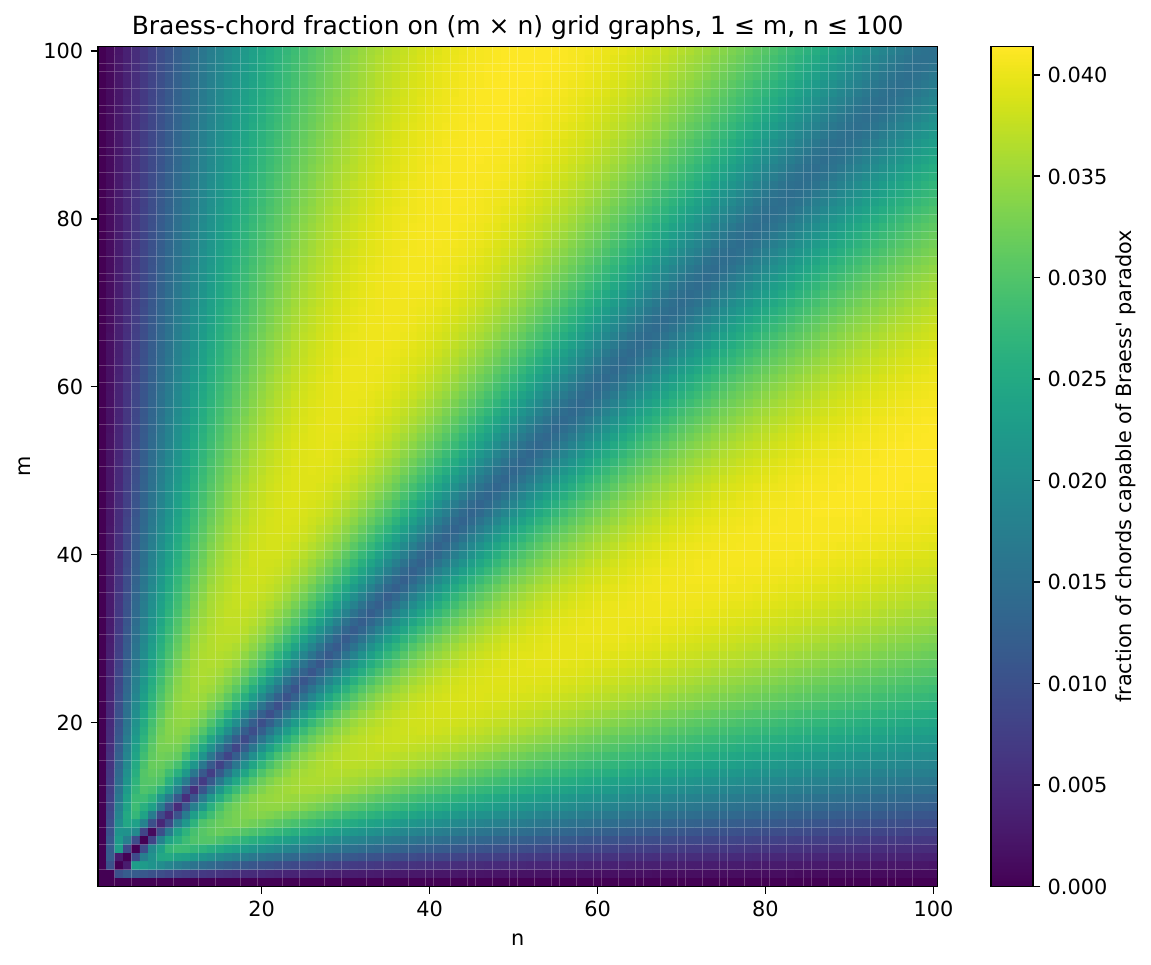}
  \caption{Fraction of Braess-capable Chords}
  \label{fig:heatmap}
\end{figure}

Figure~\ref{fig:heatmap} shows the Braess-capable chord fraction across all grids
$G_{m,n}$ with $1\le m,n\le 100$. The \hyperref[app:results]{appendix} records, for each $m$, the maximum Braess-capable fraction over all $m \leq n \leq 100$ and the value of $n$ at which it occurs. 

The fraction along $m=n$ is far below that of nearby
rectangles, so
isotropic grids are comparatively resistant to Braess-inducing chords. In fact, square grids smaller than $7 \times 7$ admit zero Braess-capable chords. The fraction is maximized along two rays of aspect ratio close to $2 : 1$. Finally, along these ridges the fraction increases with grid size but with diminishing increments,
suggesting a limiting value slightly above $0.04$; the largest
proportion in the plotted range, $\approx 0.041388$, is attained at $(m, n) = (52, 100)$ and $(100, 52)$.

\section{Severity and the 4/3 Bound}
\label{sec:bounds}

The \emph{Price of Anarchy} \cite{PoA} measures the inefficiency caused by selfish route choices and is defined by
\[
    \operatorname{PoA} \ := \ 
    \frac{C\left(f^G_{\mathrm{UE}}\right)}
         {C\left(f^G_{\mathrm{SO}}\right)}.
\]
Roughgarden and Tardos \cite{roughgardentardos2002} proved that for general graphs with nonnegative affine latency functions, the Price of Anarchy is at most $4/3$. Moreover, the original User Equilibrium is feasible in the augmented network. Hence \[C(f_{SO}^{G+e}) \ \le\  C(f_{UE}^G)\]
and 
\[
BR \ = \ \frac{C(f^{G+e}_{UE})}{C(f^G_{UE})} \ \leq\ \frac{C(f^{G+e}_{UE})}{C(f^{G+e}_{SO})} \ \le \ \frac43
\]
for any affine network. We now use the grid structure to derive a stronger bound.

\subsection{A Grid-Dependent Upper Bound}
\begin{theorem}[Braess Ratio Bound]
\label{thm:bound}
For a uniform affine $m \times n$ grid with $\max(m,n) >1$, \[BR\ \le \ 1+\frac{1}{\frac{m+n}{\max(m,n)-1} + 2\sqrt{\frac{m+n}{\max(m,n)-1}}} < \frac{4}{3}.\]
\end{theorem}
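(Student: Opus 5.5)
The plan is to combine Theorem~\ref{thm:global-cost-bound} with a lower bound on $C_{\mathrm{old}}$ in which the intercept $b$ is forced to be large whenever the chord is used. We may assume the chord carries flow $z^*>0$ and $V_{uv}<0$, since otherwise $BR\le 1$. Write $x:=-V_{uv}>0$ and $K:=\max(m,n)-1$. By Theorem~\ref{thm:global-cost-bound} and $C_{\mathrm{old}}=q^2R_{st}+bq(m+n)$,
\[
BR-1\ \le\ \frac{x z^*}{q^2R_{st}+bq(m+n)}.
\]
The rest of the argument bounds this fraction using three inequalities linking $x$, $z^*$ and $b$.

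\emph{Step 1 (level gain).} Theorem~\ref{thm:capability} and Observation~\ref{obs:shortcut} allow us to assume $k>0$ and that the chord moves backward in one coordinate. Then the forward coordinate rises by at most $\max(m,n)$ and the backward one falls by at least $1$, so $k\le K$. This is where the factor $\max(m,n)-1$ comes from.

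\emph{Step 2 (the intercept must pay for the chord).} Since $z^*$ minimizes $Z$, we have $Z(z^*)\le Z(0)$. Insert the relaxation bound \eqref{eq:relaxation-bound}, $\Edir(z)\ge q^2R_{st}+2xz+R_{uv}z^2$, into \eqref{eq:psi} and drop the nonnegative terms $\tfrac c2 z^2+dz$. Dividing by $z^*>0$ gives
\[
bK\ \ge\ bk\ \ge\ x+\tfrac12 R_{uv}z^*.
\]
Hence $C_{\mathrm{old}}\ge q^2R_{st}+\alpha q\left(x+\tfrac12R_{uv}z^*\right)$ with $\alpha:=(m+n)/K$.

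\emph{Step 3 (voltage versus resistance).} Cauchy--Schwarz in the $\Lap^+$ inner product gives $x=|q(\ee_s-\ee_t)^{\mathsf T}\Lap^+(\ee_u-\ee_v)|\le q\sqrt{R_{st}R_{uv}}$. We also have $z^*\le q$.

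\emph{Step 4 (optimization).} Set $S:=q\sqrt{R_{st}}$ and $w:=\sqrt{R_{uv}}\,z^*$. Then the numerator satisfies $xz^*\le S w$, and the denominator is at least $S^2+\alpha q x+\tfrac{\alpha}{2}qR_{uv}z^*$. I would remove the dependence on $q$ by using $q\ge z^*$ together with $q\sqrt{R_{uv}}\ge x/\sqrt{R_{st}}$. This should reduce the expression to a homogeneous ratio of the form
\[
\frac{Sw}{S^2+\alpha(\text{terms of order }Sw\text{ and }w^2)}.
\]
Maximizing over $t=w/S$ should give a bound of the shape $1/(\alpha+2\sqrt{\alpha}\cdot\text{const})$; AM--GM applied to the $S^2$ and $w^2$ terms is what produces the $2\sqrt{\alpha}$. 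For the strict inequality $<4/3$: $K\le m+n-1$ gives $\alpha>1$, so $\alpha+2\sqrt\alpha>3$.

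\emph{Main obstacle.} Step 4 is the expected difficulty: the exact constants must come out as $\alpha+2\sqrt{\alpha}$. My first attempt would pair the quadratic $R_{uv}z^2$ term in $C_{\mathrm{old}}$ against $S^2$ via AM--GM, leaving the linear $\alpha q x$ term to supply the $\alpha$. If the constants do not close, I would instead use the sharper first-order condition $Z'_-(z^*)\le 0$ from the proof of Theorem~\ref{thm:global-cost-bound}, namely $bk\ge x+R_{uv}z^*$ on the first region, in place of the averaged bound from Step 2. I would also check whether $R_{uv}$ must be eliminated before or after applying $z^*\le q$.
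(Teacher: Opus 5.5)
Your Steps 1 and 3, the reduction to $z^*>0$ and $V_{uv}<0$, and the strictness argument ($\alpha>1\Rightarrow\alpha+2\sqrt\alpha>3$) are all fine. The gap is Step 2, which is too weak to give the stated constant. The averaged inequality $Z(z^*)\le Z(0)$ only yields $bk\ge x+\tfrac12R_{uv}z^*$. Carry this through your Step 4: use monotonicity in $x$, then $x\le q\sqrt{R_{st}R_{uv}}$ and $q\ge z^*$. You are left maximizing $t/(1+\alpha t+\tfrac{\alpha}{2}t^2)$, whose maximum is $1/(\alpha+\sqrt{2\alpha})$, not $1/(\alpha+2\sqrt{\alpha})$. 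This is more than a cosmetic loss. For $\alpha$ close to $1$ (thin grids) the value exceeds $1/3$, so you would not even recover $4/3$. You genuinely need $bk\ge x+R_{uv}z^*$, i.e.\ the slope bound ${\Edir'}_-(z^*)\ge 2x+2R_{uv}z^*$ fed into $Z'_-(z^*)\le 0$.

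Your fallback asserts this bound only on the first region, where $\Edir$ is the electrical quadratic. The theorem must also cover equilibria with $z^*>\overline z$ (case (iv) of Corollary~\ref{cor:flow}). There, convexity plus the relaxation bound \eqref{eq:relaxation-bound} does not control the slope. For example, $\max(z^2,3z-2)$ is convex, lies above $z^2$ on $[0,2]$, and agrees with it to first order at $0$, yet has slope $3<2z$ on $(3/2,2)$.

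The missing ingredient is the paper's global slope bound. Take inequality \eqref{eq:energy-bound} from the proof of Theorem~\ref{thm:global-cost-bound}, namely $\Edir(z)-\Edir(0)-\tfrac z2{\Edir'}_-(z)\le xz$. Combine it with $\Edir(z)-\Edir(0)\ge 2xz+R_{uv}z^2$ to get ${\Edir'}_-(z)\ge 2x+2R_{uv}z$ for every $z\in(0,q]$. Then $Z'_-(z^*)\le 0$ gives $bk\ge x+R_{uv}z^*+cz^*+d\ge x+R_{uv}z^*$ in all regions. Your Step 4 then becomes the maximization of $t/(1+\alpha t+\alpha t^2)$. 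Its maximum is exactly $1/(\alpha+2\sqrt\alpha)$, attained at $t=1/\sqrt\alpha$; this is the paper's AM--GM step.
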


\begin{proof}

If the additional edge does not induce Braess' Paradox, the Braess Ratio is at most 1 by definition.

Suppose that the added chord induces Braess' Paradox, and let
$z^*\in(0,q]$ be its equilibrium flow. By
Theorem~\ref{thm:global-cost-bound}, $
V_{uv}<0.$

We first obtain a lower bound on the grid intercept $b$.
Equation~\eqref{eq:energy-bound} gives, for every $z\in(0,q]$,
\[
\Edir(z)-\Edir(0)
-\frac{z}{2}{\Edir'}_{-}(z)
\ \le\ 
-V_{uv}z.
\]
The directed feasible flows form a subset of the unrestricted
electrical flows, so
\[
\Edir(z)
\ \ge\ 
q^2R_{st}-2V_{uv}z+R_{uv}z^2.
\]
Since
\[
\Edir(0)\ =\ q^2R_{st},
\]
we have
\[
\Edir(z)-\Edir(0)
\ \ge\ 
-2V_{uv}z+R_{uv}z^2.
\]
Combining the preceding two inequalities gives
\[
-2V_{uv}z+R_{uv}z^2
-\frac{z}{2}{\Edir'}_{-}(z)
\ \le\ 
-V_{uv}z.
\]
Because $z>0$, this rearranges to
\begin{equation}
\label{eq:directed-slope-lower-bound}
{\Edir'}_{-}(z)
\ \ge\ 
-2V_{uv}+2R_{uv}z.
\end{equation}

The nonconstant part of the Beckmann Potential is
\[
\frac12\Edir(z)-bkz+\frac{c}{2}z^2+dz.
\]
Since $z^*>0$ is its minimizer on $[0,q]$, its left derivative at
$z^*$ is nonpositive. Therefore
\[
\frac12{\Edir'}_{-}(z^*)
-bk+cz^*+d
\ \le\ 0,
\]
and hence
\[
bk
\ \ge\ 
\frac12{\Edir'}_{-}(z^*)+cz^*+d.
\]
Applying \eqref{eq:directed-slope-lower-bound},
\begin{align*}
bk
\ &\ge\ 
-V_{uv}+R_{uv}z^*+cz^*+d\\
\ &\ge\ 
-V_{uv}+R_{uv}z^*.
\end{align*}
A paradox-inducing chord has $k>0$, so
\begin{equation}
\label{eq:grid-intercept-lower-bound}
b
\ \ge\ 
\frac{-V_{uv}+R_{uv}z^*}{k}.
\end{equation}

The equilibrium cost in the original grid is
\[
C_{\mathrm{old}}
\ =\ 
q^2R_{st}+bq(m+n).
\]
By Theorem~\ref{thm:global-cost-bound},
\[
C_{\mathrm{new}}-C_{\mathrm{old}}
\ \le\ 
-V_{uv}z^*.
\]
Consequently,
\[
BR
\ =\ 
\frac{C_{\mathrm{new}}}{C_{\mathrm{old}}}
\ \le\ 
1+
\frac{-V_{uv}z^*}
{q^2R_{st}+bq(m+n)}.
\]
Using \eqref{eq:grid-intercept-lower-bound},
\[
BR
\ \le\ 
1+
\frac{-V_{uv}z^*}
{
q^2R_{st}
+
\frac{q(m+n)}{k}
\left(-V_{uv}+R_{uv}z^*\right)
}.
\]

The right-hand side is increasing in $z^*$. Since $z^*\ \le\  q$,
\begin{equation}
\label{eq:intermediate_bound}
BR
\ \le\ 
1+
\frac{-V_{uv}}
{
qR_{st}
+
\frac{m+n}{k}
\left(qR_{uv}-V_{uv}\right)
}.
\end{equation}

Recall that \[
R_{st}
\ =\ 
(\ee_s-\ee_t)^\mathsf{T}\Lap^+(\ee_s-\ee_t),
\]
\[
R_{uv}
\ =\ 
(\ee_u-\ee_v)^\mathsf{T}\Lap^+(\ee_u-\ee_v),
\] 
\[
V_{uv}\ =\ 
q(\ee_s-\ee_t)^\mathsf{T}\Lap^+(\ee_u-\ee_v).
\]

Since $\Lap^+$ is positive semidefinite, we may write $\Lap^+ = M^TM$ for some $M \in \mathbb{R}^{|V| \times |V|}$. Then the three quantities can be expressed as 

\[R_{st}
\ =\ 
\|M(\ee_s - \ee_t)\|^2
\]
\[
R_{uv}
\ =\ 
\|M(\ee_u -  \ee_v)\|^2,
\] 
\[
V_{uv}\ =\ 
q\left(M(\ee_s - \ee_t)\right)\cdot \left(M(\ee_u-\ee_v)\right).
\]

Cauchy--Schwarz \cite{horn-johnson} states that for two vectors $v_1$ and $v_2$ \[|v_1 \cdot v_2| \ \le\  \|v_1\| \|v_2\|.\] Taking $v_1 = M(\ee_s - \ee_t)$ and $v_2 = M(\ee_u - \ee_v),$ we have $V_{uv}^2 \le q^2R_{st}R_{uv}$.

Since $V_{uv} < 0$, \[-V_{uv} \ \le\  q\sqrt{R_{st}R_{uv}}.\] Since the right-hand side of \eqref{eq:intermediate_bound} is increasing in $-V_{uv}$, \[BR \ \le\  1+ \frac{\sqrt{R_{uv} R_{st}}}{R_{st} + \frac{m+n}{k}\left(R_{uv}+\sqrt{R_{uv}R_{st}}\right)}.\]
By the AM-GM inequality \[R_{st} + \frac{m+n}{k}R_{uv} \ \ge\  2\sqrt{\frac{m+n}{k}}\sqrt{R_{uv}R_{st}}.\] Therefore \begin{align*}BR \ &\le\  1 + \frac{\sqrt{R_{uv}R_{st}}}{\frac{m+n}{k}\sqrt{R_{uv}R_{st}} + \sqrt{\frac{m+n}{k}} \sqrt{R_{uv}R_{st}}} \\
& \ =\  1 + \frac{1}{\frac{m+n}{k} + 2\sqrt{\frac{m+n}{k}}}. \end{align*}
 
Observation~\ref{obs:shortcut} shows that a Braess-capable chord increases in one coordinate while decreasing the other. The increase is at most $\max(m,n)$, while the decrease is at least 1. Therefore its level gain $k$ satisfies \[k \ \le\  \max(m,n) - 1.\]
 
Hence \begin{equation}
    \label{eq:best_bound}
    BR \ \le \  1+\frac{1}{\frac{m+n}{\max(m,n)-1} + 2\sqrt{\frac{m+n}{\max(m,n)-1}}}.
\end{equation}
In particular, since $\max(m,n) - 1 < m+n$, we have \[BR \ < \ 1 + \frac{1}{3} = \frac{4}{3}.\] Thus the grid structure yields a strict improvement over the general $\frac{4}{3}$ bound.
\end{proof}

\begin{corollary}
    Without loss of generality, assume $m \ge n$, and write the aspect ratio $r := n/m$. Then for each $r$, as $m \rightarrow \infty$, \[BR \ \le \ \frac{4}{3} - \frac{2}{9}r  + \frac{19}{108}r^2+O(r^3)  \qquad \text{as }r \rightarrow 0.\]
\end{corollary}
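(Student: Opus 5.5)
The plan is to apply Theorem~\ref{thm:bound} with $\max(m,n)=m$ and reduce the right-hand side to a function of the aspect ratio alone. Define
\[
x \ :=\ \frac{m+n}{m-1}, \qquad g(x)\ :=\ 1+\frac{1}{x+2\sqrt{x}},
\]
so that Theorem~\ref{thm:bound} reads $BR\le g(x)$. Since $x+2\sqrt x$ is strictly increasing in $x>0$, the function $g$ is strictly decreasing.

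The first step is to compare $x$ with $1+r$. Because $m-1<m$, we have
\[
x\ =\ \frac{m+n}{m-1}\ \ge\ \frac{m+n}{m}\ =\ 1+r,
\]
and therefore $BR\le g(x)\le g(1+r)$. This holds for every $m>1$, not only in the limit. Moreover $x\to 1+r$ as $m\to\infty$ with $r$ fixed, so the bound $g(1+r)$ is exactly the limiting value of Theorem~\ref{thm:bound}. It remains to expand $g(1+r)$ as $r\to 0$.

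The second step is a Taylor expansion. First,
\[
\sqrt{1+r}\ =\ 1+\tfrac{r}{2}-\tfrac{r^2}{8}+O(r^3),
\]
which gives
\[
x+2\sqrt{x}\ =\ 3+2r-\tfrac{r^2}{4}+O(r^3)\qquad\text{at } x=1+r.
\]
Factoring out $3$ and using $1/(1+\delta)=1-\delta+\delta^2+O(\delta^3)$ with $\delta=\tfrac{2r}{3}-\tfrac{r^2}{12}$, we get
\[
\frac{1}{3+2r-\frac{r^2}{4}}\ =\ \frac13\left(1-\frac{2r}{3}+\frac{r^2}{12}+\frac{4r^2}{9}\right)+O(r^3)\ =\ \frac13-\frac{2}{9}r+\frac{19}{108}r^2+O(r^3),
\]
since $\tfrac{1}{12}+\tfrac{4}{9}=\tfrac{19}{36}$. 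Adding $1$ yields the claimed expansion $\tfrac43-\tfrac29 r+\tfrac{19}{108}r^2+O(r^3)$.

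There is no real obstacle here. The one point needing care is the order of the two limits. The monotonicity of $g$ gives $BR\le g(1+r)$ uniformly in $m$, which removes any dependence on $m\to\infty$. The $O(r^3)$ term is then an analytic remainder of $g(1+r)$ about $r=0$, which is justified because $g$ is smooth near $x=1$.
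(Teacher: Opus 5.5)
Your proposal is correct and follows the paper's route: specialize Theorem~\ref{thm:bound} to $\max(m,n)=m$, pass to $1+\frac{1}{1+r+2\sqrt{1+r}}$, and expand $\sqrt{1+r}$ binomially, with your coefficient arithmetic $\tfrac{1}{12}+\tfrac{4}{9}=\tfrac{19}{36}$ giving the same $\tfrac{19}{108}r^2$ term. Your monotonicity step $\frac{m+n}{m-1}\ge 1+r$ slightly sharpens the paper's remark that the $1/m$ term is negligible: it yields $BR\le 1+\frac{1}{1+r+2\sqrt{1+r}}$ for every $m>1$, not just in the limit $m\to\infty$.
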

\begin{proof}
    Write the bound from \eqref{eq:best_bound} as \[BR \ \le \ 1 + \frac{1}{\frac{1+r}{1 - 1/m} + 2\sqrt{\frac{1+r}{1-1/m}}}. \]
    The $1/m$ term is negligible; hence, it suffices to expand \[1 + \frac{1}{1+r+2\sqrt{1+r}}.\] Using the Binomial Theorem, we may write $\sqrt{1+r} \ = \ 1 + \frac12r-\frac18r^2 + \frac{1}{16}r^3 + O(r^4)$.

    In other words, \begin{align*}
        BR \ &\le\  1 + \frac{1}{3+2r - \frac14r^2 + \frac18r^3 + O(r^4)} \\
        &=\  \frac{4}{3} - \frac{2}{9}r + \frac{19}{108}r^2 + O(r^3) \qquad \text{as }r \rightarrow 0,
    \end{align*} as claimed. 
\end{proof}

\begin{figure}[H]
\centering
\includegraphics{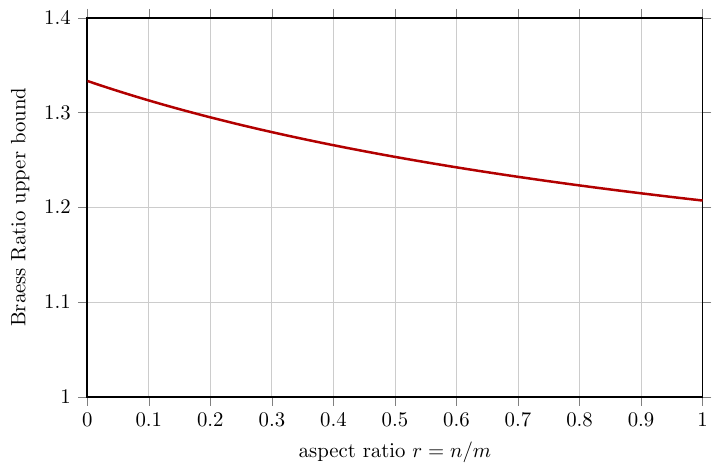} 
\caption{The upper bound in the limit $m\to\infty$, as a function of aspect ratio.}
\label{fig:brbound}
\end{figure}

As shown in Figure~\ref{fig:brbound}, for thin rectangles, the bound in \eqref{eq:best_bound} approaches $4/3$; for square grids, it approaches $1 + 1/(2+2\sqrt2) \approx 1.207$. Thus, the bound is substantially stronger than the general affine bound for square grids, while asymptotically recovering the general bound on thin rectangles. For comparison, for arbitrary continuous and nondecreasing latency functions, the Braess Ratio from adding one edge is at most 2 \cite{lin2011}.

\subsection{The Optimal Latency}

We now establish the latency coefficients of a ratio-maximizing chord.

\begin{theorem}[Optimal added edge latency]
\label{thm:optimal-edge}
On an $m \times n$ grid capable of Braess' Paradox, a chord maximizes the Braess Ratio only when it has zero latency; that is,
\[
  c \ =\  d\  =\  0.
\]
\end{theorem}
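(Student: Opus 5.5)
The idea is to show that any paradox-inducing triple $(b,c,d)$ with $(c,d)\neq(0,0)$ can be replaced by a triple $(b',0,0)$ with $b'<b$ that keeps the same equilibrium chord flow $z^*$ and strictly increases the Braess Ratio. Hence no maximizer has positive chord latency. Throughout, the grid, the chord $u\rightarrow v$, $a$ and $q$ are fixed, and the ratio is maximized over $b,c,d\ge 0$. If the chord is unused or does not induce the paradox, then $BR\le 1$. Since the grid is capable of the paradox, a maximizer must induce it, so we may assume $z^*\in(0,q]$ and, by Theorem~\ref{thm:global-cost-bound} and Theorem~\ref{thm:capability}, $V_{uv}<0$ and $k>0$.

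\emph{Step 1 (absorb the chord latency into $b$).} Given $(b,c,d)$ with equilibrium flow $z^*>0$, define
\[
b' \ :=\ b-\frac{cz^*+d}{k}.
\]
First check that $b'\ge 0$. The proof of Theorem~\ref{thm:bound} showed, from $Z'_-(z^*)\le 0$ and \eqref{eq:directed-slope-lower-bound}, that $bk-cz^*-d\ge \frac12{\Edir'}_-(z^*)\ge -V_{uv}+R_{uv}z^*>0$. So in fact $b'>0$. Moreover $b'<b$ strictly whenever $(c,d)\ne(0,0)$, because $z^*>0$.

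\emph{Step 2 (the equilibrium is unchanged).} Compare the potential \eqref{eq:psi} under $(b,c,d)$ with the potential under $(b',0,0)$. In the second, the smooth part $-bkz+\frac c2z^2+dz$ becomes the linear function $-b'kz$. The two smooth parts have the same derivative at $z^*$, namely $-bk+cz^*+d=-b'k$. The term $\frac12\Edir$ is common and convex by Theorem~\ref{thm:structure}(i), so the one-sided derivatives of the two potentials agree at $z^*$. For a convex function on $[0,q]$, the optimality of $z^*$ is characterized entirely by these one-sided derivatives: $Z'_-(z^*)\le 0$, and $Z'_+(z^*)\ge0$ when $z^*<q$. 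Hence $z^*$ also minimizes the new potential, and by strict convexity it is the unique new equilibrium. The grid flow $f(z^*)$ is likewise unchanged.

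\emph{Step 3 (compare costs).} Write $\Delta:=(b-b')q(m+n)>0$. Then
\[
C_{\mathrm{new}}=\Edir(z^*)+b(q(m+n)-kz^*)+cz^{*2}+dz^*,
\qquad
C'_{\mathrm{new}}=\Edir(z^*)+b'(q(m+n)-kz^*).
\]
Their difference is $(b-b')(q(m+n)-kz^*)+(cz^*+d)z^*$. Since $cz^*+d=(b-b')k$, this equals exactly $\Delta$. Similarly, $C_{\mathrm{old}}-C'_{\mathrm{old}}=(b-b')q(m+n)=\Delta$. Thus the numerator and denominator of $BR$ both decrease by the same $\Delta>0$. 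Because $C_{\mathrm{new}}/C_{\mathrm{old}}>1$ and $C'_{\mathrm{old}}>0$ (indeed $C'_{\mathrm{old}}\ge q^2R_{st}>0$), we get
\[
\frac{C_{\mathrm{new}}-\Delta}{C_{\mathrm{old}}-\Delta}\ >\ \frac{C_{\mathrm{new}}}{C_{\mathrm{old}}}.
\]
So $(b,c,d)$ is not a maximizer, and a maximizer must satisfy $c=d=0$.

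The main obstacle is Step 2. The potential is only piecewise quadratic beyond the first flow region, so we cannot use the closed form for $z^*$. The argument must therefore rest on one-sided derivatives and the convexity from Theorem~\ref{thm:structure}, and it must also handle the boundary case $z^*=q$. The nonnegativity $b'\ge0$ is the other delicate point, and it is supplied by \eqref{eq:directed-slope-lower-bound} together with $V_{uv}<0$.
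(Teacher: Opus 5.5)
Your proposal is correct and follows essentially the same route as the paper. Like the paper, you replace $(b,c,d)$ by an intercept-only triple $(\tilde b,0,0)$ with $\tilde b<b$, use convexity and one-sided derivatives to show $z^*$ stays the equilibrium, and then compare ratios. The only difference is the choice of $\tilde b$: the paper takes the smallest admissible intercept $b_0={\Edir'}_-(z^*)/(2k)\le b'$, which weakly increases the excess, whereas your $b'=b-(cz^*+d)/k$ keeps $C_{\mathrm{new}}-C_{\mathrm{old}}$ exactly fixed and lowers both costs by the same $\Delta$, making the final comparison slightly cleaner.
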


\begin{proof}
Fix an added chord $u \rightarrow v$ and suppose that $b_1, c_1, d_1 \ge 0$ with $(c_1, d_1) \neq (0,0)$ induce Braess' Paradox with equilibrium chord flow $z^{*} > 0$. By Theorem~\ref{thm:capability}, $V_{uv} < 0$ and $k > 0$. Recall that $\Edir(0) = q^{2}R_{st}$,
and since $V_{uv} < 0$, inequality~\eqref{eq:relaxation-bound} gives
\[
  \Edir(z^{*}) - \Edir(0)
  \ \ge\ 
  -2V_{uv}z^{*} + R_{uv}(z^{*})^{2}
  \ >\  0.
\]

The nonconstant part of the Beckmann Potential is
\[
  \frac{1}{2}{\Edir}(z) - b_1kz + \frac{c_1}{2}z^{2} + d_1z.
\]
Since $z^{*}$ minimizes this convex function on $[0, q]$, its left
derivative there is nonpositive, giving
\begin{equation}
  \frac{1}{2}{\Edir'}_-(z^{*}) - b_1k + c_1z^{*} + d_1 \ \le\  0.
  \label{eq:leftderiv}
\end{equation}
Moreover,
\[
  {\Edir'}_+(0) = -2V_{uv} > 0,
\]
so convexity gives ${\Edir'}_-(z^{*}) > 0$.

Now set
\[
  b_{0} \ =\  \frac{{\Edir'}_-(z^{*})}{2k},
  \qquad
  c_{0} \ =\  d_{0} \ =\  0.
\]

Under the parameters $(b_0,0,0)$, the left derivative of the
Beckmann Potential at $z^*$ is
\[
\frac{1}{2}{\Edir'}_{-}(z^*)-b_0k\ =\ 0.
\]
By convexity, $z^*$ is also the minimizer of the Beckmann Potential under these new parameters.

We now compare the two travel-time increases under $(b_1, c_1, d_1)$ and $(b_0, 0,0)$. Under the original parameters $(b_1,c_1,d_1)$,
\[
  \Delta_{\mathrm{1}}
  \ := \
  C_{\mathrm{new}}(b_1, c_1, d_1) - C_{\mathrm{old}}(b_1)
  \ = \
  \Edir(z^{*}) - \Edir(0) - z^{*}\left(b_1k - c_1z^{*} - d_1\right),
\]
while under $(b_{0}, 0, 0)$,
\[
  \Delta_{0}
  \ := \
  C_{\mathrm{new}}(b_0, 0, 0) - C_{\mathrm{old}}(b_0)
  \ = \
  \Edir(z^{*}) - \Edir(0) - \frac{z^{*}}{2}{\Edir'}_{-}(z^{*}).
\]
By~\eqref{eq:leftderiv},
\begin{equation}
  \Delta_{0} \ \ge\  \Delta_{1} \ >\  0,
  \label{eq:excess}
\end{equation}
the strict positivity holding because the original chord induces the
paradox.

Next we compare denominators. From~\eqref{eq:leftderiv}, and since
$c_1, d_1 \ge 0$, $z^{*} > 0$, and $k > 0$,
\[
  b_{0}
  \ =\  \frac{{\Edir'}_-(z^{*})}{2k}
  \ \le\  \frac{b_1k - c_1z^{*} - d_1}{k}
  \ <\  b_1.
\]
Writing $C_{\mathrm{old}}(b) = q^{2}R_{st} + bq(m+n)$ for the
original-grid cost, it follows $C_{\mathrm{old}}(b_0) < C_{\mathrm{old}}(b_1)$.

Finally, we express the Braess Ratio and apply
\eqref{eq:excess}:
\[
  BR_{0}
  \ =\  1 + \frac{\Delta_{0}}{C_{\mathrm{old}}(b_{0})}
  \ \ge\ 
  1 + \frac{\Delta_{1}}{C_{\mathrm{old}}(b_{0})}
  \ >\ 
  1 + \frac{\Delta_{1}}{C_{\mathrm{old}}(b_1)}
  \ =\  BR_{\mathrm{1}}.
\]

Thus every paradox-inducing parameter choice $b_1, c_1, d_1 \ge 0$ with $(c_1, d_1) \neq (0,0)$ can be replaced by one with
$c = d = 0$ while strictly increasing the Braess Ratio. Hence any maximizing choice must give the added chord zero latency.
\end{proof}

\section{Future Directions}
\label{sec:future}

Although Theorem~\ref{thm:bound} provides an upper bound, we have not determined the supremum for the Braess Ratio. Theorem~\ref{thm:optimal-edge} shows that any maximizing chord has zero latency, but further work is needed to determine whether a maximizing configuration exists and, if so, the location of the chord and dimensions of the maximizing grid. Establishing a tighter upper bound decreasing in $m+n$ that could reduce the global supremum to a finite check is a natural next step.

Although no elementary closed form has been found for the voltage drops in a grid, the two-point effective resistance of the uniform grid does
admit an exact finite summation. Using the closed form for the grid Laplacian, Wu~\cite{wu2004} showed that for a grid with uniform resistance $R$,
\begin{align*}
    &R_{\mathrm{eff}}\left((i, j), (i', j')\right) \\
   &= R \sum_{\substack{0 \le r_1 \le m,\ 0 \le r_2 \le n \\ (r_1, r_2) \ne (0, 0)}}
    \frac{\varepsilon_{r_1}  \varepsilon_{r_2}}{(m + 1)(n + 1)} 
    \frac{\left[
      \cos\frac{\pi r_1 (i + \frac{1}{2})}{m + 1}
      \cos\frac{\pi r_2 (j + \frac{1}{2})}{n + 1}
      - \cos\frac{\pi r_1 (i' + \frac{1}{2})}{m + 1}
      \cos\frac{\pi r_2 (j' + \frac{1}{2})}{n + 1}
    \right]^2}
    {4 - 2 \cos\frac{\pi r_1}{m + 1} - 2 \cos\frac{\pi r_2}{n + 1}},
\end{align*}
where the Neumann normalization factor is
\[
\varepsilon_r \ :=\ 
\begin{cases}
1, & r = 0, \\
2, & r \ge 1.
\end{cases}
\]
In particular, for opposite corners $(0, 0)$ and $(m, n)$, the expression simplifies to
\[
R^{\mathrm{corner}}_{m+1,  n+1}
  \ =\  R \sum_{\substack{0 \le r_1 \le m,\ 0 \le r_2 \le n \\ r_1 + r_2 \text{ odd}}}
    \frac{\varepsilon_{r_1}  \varepsilon_{r_2}}{(m + 1)(n + 1)} 
    \frac{\cos^2\frac{\pi r_1}{2(m + 1)} \cos^2\frac{\pi r_2}{2(n + 1)}}
         {\sin^2\frac{\pi r_1}{2(m + 1)} + \sin^2\frac{\pi r_2}{2(n + 1)}}.
\]
Essam and Wu~\cite{essamwu2009} derived the asymptotic expansion for
an $n \times n$ vertex grid, or $G_{n-1, n-1}$:
\[
R^{\mathrm{corner}}_{n, n}
  \ =\  R \left( \frac{4}{\pi} \log n + C_0
    + \frac{C_2}{n^2} - \frac{C_4}{n^4} + O(n^{-6}) \right),
\] where \[C_0 \approx 0.077318, \qquad C_2 \approx 0.266070, \quad \text{ and } \quad  C_4 \approx 0.534779.\]
These exact summations may be applied to derive asymptotics for $R_{st}$, $R_{uv}$, and $V_{uv}$ as $m, n \rightarrow \infty$. The proof of Theorem~\ref{thm:bound} in fact gives a stricter chord-dependent bound
\[
    BR \ \leq\  1+
    \frac{1}{
        \frac{m+n}{k}
        +2\sqrt{\frac{m+n}{k}}
    }.
\]
A stronger relation between the level gain
$k$ and the voltage condition $V_{uv}<0$ that bounds $k$ further could produce a tighter bound. Furthermore, an asymptotic expansion for $V_{uv}$ could help classify all grids that admit Braess' Paradox.  It would suffice to determine the grids for which there exists a chord satisfying the capability conditions of Theorem~\ref{thm:capability}.

Another possible extension is to construct a physical circuit that shows Braess' Paradox in the grid setting. Physical realizations of the paradox are known: Cohen and Horowitz \cite{cohenhorowitz1991} demonstrated electrical and mechanical analogues for a four-node network in 1991. Although Lemma~\ref{thm:linear-latency} shows that the paradox cannot occur in a network of only resistors, circuit elements such as batteries can be added to produce the constant term of the affine latencies. Constructing such a circuit could further illustrate the connection between congestion networks and electrical systems. 
\section*{Appendix}
We provide computational results for every $m \times n$ grid with $1 \le m \le n \le 100$, recording the proportion of chords capable of Braess' Paradox for some combination of affine latency functions. For each fixed $m$, Table~\ref{tab:prevalence-results} records the maximum proportion as well as the value of $n$ achieving the maximum.
\label{app:results}

\begingroup
\small
\setlength{\tabcolsep}{10pt}
\renewcommand{\arraystretch}{1.05}

\begin{longtable}{c|c|c}
\caption{Maximum Braess-capable chord fraction.}
\label{tab:prevalence-results}\\
\hline
$m$ & Maximum fraction & Maximizing $n$ \\
\hline
\endfirsthead

\multicolumn{3}{c}%
{\tablename\ \thetable\ -- continued from previous page}\\
\hline
$m$ & Maximum fraction & Maximizing $n$ \\
\hline
\endhead

\hline
\multicolumn{3}{r}{Continued on next page}
\endfoot

\hline
\endlastfoot

1 & 0.0000000 & all $n$ \\
2 & 0.0120482 & 4 \\
3 & 0.0231092 & 5 \\
4 & 0.0250000 & 9 \\
5 & 0.0283251 & 10 \\
6 & 0.0302542 & 14 \\
7 & 0.0317882 & 15 \\
8 & 0.0329175 & 17 \\
9 & 0.0337430 & 19 \\
10 & 0.0345157 & 22 \\
11 & 0.0351084 & 23 \\
12 & 0.0358868 & 25 \\
13 & 0.0363279 & 26 \\
14 & 0.0366876 & 29 \\
15 & 0.0371111 & 31 \\
16 & 0.0375586 & 33 \\
17 & 0.0377757 & 36 \\
18 & 0.0380794 & 38 \\
19 & 0.0383396 & 40 \\
20 & 0.0384961 & 40 \\
21 & 0.0387346 & 42 \\
22 & 0.0389360 & 44 \\
23 & 0.0391416 & 46 \\
24 & 0.0392817 & 49 \\
25 & 0.0394058 & 50 \\
26 & 0.0395753 & 52 \\
27 & 0.0396785 & 55 \\
28 & 0.0398001 & 56 \\
29 & 0.0399273 & 58 \\
30 & 0.0400402 & 59 \\
31 & 0.0401404 & 62 \\
32 & 0.0402516 & 64 \\
33 & 0.0403417 & 66 \\
34 & 0.0404095 & 68 \\
35 & 0.0404916 & 70 \\
36 & 0.0405609 & 71 \\
37 & 0.0406467 & 73 \\
38 & 0.0407100 & 74 \\
39 & 0.0407693 & 78 \\
40 & 0.0408399 & 79 \\
41 & 0.0408947 & 81 \\
42 & 0.0409509 & 83 \\
43 & 0.0410109 & 85 \\
44 & 0.0410633 & 87 \\
45 & 0.0411109 & 88 \\
46 & 0.0411490 & 91 \\
47 & 0.0412011 & 92 \\
48 & 0.0412480 & 94 \\
49 & 0.0412920 & 97 \\
50 & 0.0413250 & 99 \\
51 & 0.0413705 & 100 \\
52 & 0.0413880 & 100 \\
53 & 0.0413795 & 100 \\
54 & 0.0413402 & 100 \\
55 & 0.0412709 & 100 \\
56 & 0.0411695 & 100 \\
57 & 0.0410459 & 100 \\
58 & 0.0408774 & 100 \\
59 & 0.0406964 & 100 \\
60 & 0.0404781 & 100 \\
61 & 0.0402309 & 100 \\
62 & 0.0399576 & 100 \\
63 & 0.0396483 & 100 \\
64 & 0.0393181 & 100 \\
65 & 0.0389557 & 100 \\
66 & 0.0385692 & 100 \\
67 & 0.0381601 & 100 \\
68 & 0.0377155 & 100 \\
69 & 0.0372487 & 100 \\
70 & 0.0367473 & 100 \\
71 & 0.0362263 & 100 \\
72 & 0.0356794 & 100 \\
73 & 0.0351114 & 100 \\
74 & 0.0345059 & 100 \\
75 & 0.0338793 & 100 \\
76 & 0.0332348 & 100 \\
77 & 0.0325730 & 100 \\
78 & 0.0318800 & 100 \\
79 & 0.0311695 & 100 \\
80 & 0.0304345 & 100 \\
81 & 0.0296854 & 100 \\
82 & 0.0289135 & 100 \\
83 & 0.0281214 & 100 \\
84 & 0.0273185 & 100 \\
85 & 0.0264946 & 100 \\
86 & 0.0256635 & 100 \\
87 & 0.0248210 & 100 \\
88 & 0.0239655 & 100 \\
89 & 0.0231069 & 100 \\
90 & 0.0222428 & 100 \\
91 & 0.0213778 & 100 \\
92 & 0.0205159 & 100 \\
93 & 0.0196696 & 100 \\
94 & 0.0188284 & 100 \\
95 & 0.0180276 & 100 \\
96 & 0.0172485 & 100 \\
97 & 0.0165278 & 100 \\
98 & 0.0158841 & 100 \\
99 & 0.0153608 & 100 \\
100 & 0.0150740 & 100 \\

\end{longtable}
\endgroup

\end{document}